\documentclass[12pt]{amsart}
\usepackage{amssymb}
\usepackage{fullpage}
\usepackage{amsrefs}
\usepackage{hyperref}

\usepackage{comment}
\theoremstyle{definition}

\newtheorem{theorem}{Theorem}
\newtheorem{lemma}{Lemma}
\newtheorem{proposition}{Proposition}
\newtheorem{example}{Example}

\newcommand{\D}{\mathbb{D}}

\title{Compactness of products and commutators of inner projections}

\author{Peiran Zhang}
\date{\today}                                           % Activate to display a given date or no date
\address{School of Mathematical Sciences,
Dalian University of Technology, Dalian 116024, People's Republic of China}
\email{1196358033@qq.com}

\author{Roumei Tian}
\address{School of Mathematical Sciences,
Dalian University of Technology, Dalian 116024, People's Republic of China}
\email{trm0217@mail.dlut.edu.cn}

\author{Yufeng Lu}
\address{School of Mathematics Sciences, Dalian University of Technology,
Dalian, Liaoning, 116024, P. R. China}
\email{lyfdlut@dlut.edu.cn}

\author{Yixin Yang}
\address{School of Mathematics Sciences, Dalian University of Technology,
Dalian, Liaoning, 116024, P. R. China}
\email{yangyixin@dlut.edu.cn}

\author{Chao Zu*}
\address{School of Mathematical Sciences,
Dalian University of Technology, Dalian 116024, People's Republic of China}
\email{zuchao@dlut.edu.cn}

\thanks{*Corresponding author}
\subjclass[2020]{Primary 47B38, Secondary 42B30, 30J05}
\keywords{Hardy space, polydisc, Beurling type quotient modules, orthogonal projections, inner functions, inner projections}

\begin{document}
	\begin{abstract}
		In this paper, we study the compactness of the product and the commutator of two inner projections on the Hardy spaces over the unit disk and the polydisc. For the single-variable case, we provide a complete characterization of the compactness of the commutator of two inner projections by means of Douglas algebra. In the multivariable setting, we discover a rigidity phenomenon: on the bidisc, the product of two inner projections is compact if and only if it has finite rank, whereas on the polydisc of dimension strictly greater than two, any such compact product must be trivial.
	\end{abstract}
	\maketitle		
	
	\section{Introduction}
	Let $\mathbb{D}$ be the open unit disk in the complex plane $\mathbb{C}$, and let $\mathbb{T} = \partial\mathbb{D}$ be its boundary, the unit circle. For an integer $n \ge 1$, the unit polydisc $\mathbb{D}^n$ and its distinguished boundary, the unit torus $\mathbb{T}^n$, are defined as the Cartesian products of $n$ copies of $\mathbb{D}$ and $\mathbb{T}$, respectively. We denote by $L^2(\mathbb{T}^n)$ and $L^\infty(\mathbb{T}^n)$ the standard Lebesgue spaces of square-integrable and essentially bounded functions on the torus, respectively. 
	
	The Hardy space over the polydisc, denoted by $H^2(\mathbb{D}^n)$, consists of all holomorphic functions $f$ on $\mathbb{D}^n$ satisfying
	$$ \sup_{0 \le r_1, \ldots, r_n < 1} \int_{\mathbb{T}^n} |f(r_1w_1, \ldots, r_nw_n)|^2 dm(w) < \infty,
    $$
    where $dm$ denotes the normalized Lebesgue measure on $\mathbb{T}^n$.
	By Fatou's Theorem \cite{Rudin1969}*{Chapter 2}, $H^2(\mathbb{D}^n)$ can be naturally identified with $H^2(\mathbb{T}^n)$, which is the closed subspace of $L^2(\mathbb{T}^n)$ consisting of functions whose Fourier coefficients are supported on $\mathbb{Z}_+^n$ (\(\mathbb{Z}_+\) denotes the set of nonnegative integers). The space $H^\infty(\mathbb{D}^n)$ is the Banach algebra of all bounded holomorphic functions on $\mathbb{D}^n$, equipped with the supremum norm $\|f\|_\infty = \sup_{z \in \mathbb{D}^n} |f(z)|$. A function $\phi \in H^\infty(\mathbb{D}^n)$ is called \textit{inner} if its radial boundary limits satisfy $|\phi(w)| = 1$ almost everywhere on $\mathbb{T}^n$.
	
The \textit{coordinate shifts} on $H^2(\mathbb{D}^n)$ are the tuple of multiplication operators $(M_{z_1}, \ldots, M_{z_n})$, given by $M_{z_j}f(z) = z_j f(z)$ for $1 \le j \le n$. A closed subspace $\mathcal{S} \subset H^2(\mathbb{D}^n)$ is called a \textit{submodule} if it is invariant under each coordinate shift, i.e., $M_{z_j}\mathcal{S} \subset \mathcal{S}$ for all $j$. Correspondingly, a closed subspace $\mathcal{Q}$ is said to be a \textit{quotient module} if  $M_{z_j}^*\mathcal{Q} \subset \mathcal{Q}$ for all \(j\).
	  The classical Beurling's theorem states that every non-trivial submodule of $H^2(\mathbb{D})$ takes the form $\mathcal{S} = \theta H^2(\mathbb{D})$, where $\theta$ is an inner function. However, in higher dimensions ($n > 1$), the situation becomes significantly more intricate, as there exist submodules that are not generated by inner functions (see \cite{Rudin1969}*{Chapter 5}). In this paper, we refer to submodules specifically generated by inner functions as \textit{Beurling-type submodules}, and their corresponding orthogonal complements as \textit{Beurling-type quotient modules}.

	An orthogonal projection $P$ on some Hilbert space $\mathcal{H}$ is a bounded self-adjoint operator satisfying
	$
	P^2=P.
	$
	In general operator theory, the geometric relationship between two closed subspaces of a Hilbert space can be effectively captured by the algebraic properties of their corresponding orthogonal projections $P$ and $Q$.  This approach, often referred to as the two projections theory, was pioneered by Dixmier \cite{Dixmier1948} and Halmos \cite{Halmos1969}. For instance, the spectral properties of the product $PQ$ are intimately related to the minimal angle between the two subspaces \cite{Davis1958}. Furthermore, the commutator  $[P, Q]$ provides crucial information about their relative position \cite{Bottcher2010}. Motivated by this perspective, we investigate analogical problems within  $H^2(\mathbb{D}^n)$.

	An orthogonal projection $P_{\mathcal{Q}_\theta}$ on $H^2(\mathbb{D}^n)$ is called an \textit{inner projection} if it is an orthogonal projection onto a Beurling-type quotient module generated by the inner function $\theta$, that is,
	$$
	\operatorname{Ran}P_{\mathcal{Q}_\theta}=H^2(\mathbb{D}^n)\ominus\theta H^2(\mathbb{D}^n).
	$$
	Correspondingly, the orthogonal projection onto the Beurling-type submodule $\theta H^2(\mathbb{D}^n)$ is denoted by
	$P_{\theta}:=I-P_{\mathcal{Q}_\theta}.$

	In this paper, we first focus on the compactness of the commutator of inner projections on \(H^2(\mathbb{D})\). Debnath et al. \cite{Debnath2024} provided a complete characterization when the commutators are zero on $H^2(\mathbb{D}^n)$ (see \cite{Bickel2016} for the original question). Our first main result extends their result and provides a characterization of the compact commutator $[P_{\mathcal{Q}_{\varphi}}, P_{\mathcal{Q}_{\psi}}]$ on  $H^2(\mathbb{D})$, based on Douglas's localization. Furthermore, we provide several examples of obtaining more specific characterizations for certain special inner functions.

	Furthermore, we investigate the compactness of products of inner projections in the multivariable setting. It is well known that on the Hardy space $H^2(\mathbb{D})$,  the compactness of the product $P_{\mathcal{Q}_{\varphi}} P_{\mathcal{Q}_{\psi}}$ is equivalent to the compactness of the semi-commutators of corresponding Toeplitz operators \cite{Berger1978}. 
    In 1978, Axler, Chang, and Sarason \cite{ACS1978} characterized the compactness of such semi-commutators by means of Douglas algebra. We address the analogous problem on the polydisc $H^2(\mathbb{D}^n)$. We show that on the bidisc ($n=2$), any compact product must be of finite rank, which happens exactly when the inner symbols are separated finite Blaschke products. In contrast, for the polydisc with $n > 2$, a compact product must be identically zero.

    \quad\\
    \textbf{Acknowledgments}  We thank Rounak Biswas and Srijan Sarkar for correcting a typo in our definition of the defect operator. We also notice that their recent work \cite{BiswasSarkar26} investigates analogous questions and proves the same result via the Sz.-Nagy–Foiaș model, offering a different method from ours.

\section{\texorpdfstring{Commutators  of inner projections on $H^2(\mathbb{D})$}{Commutators}}
\subsection{Preliminaries}
Recall that $L^2(\mathbb{T})$ denotes the Lebesgue space of square integrable functions on the unit circle $\mathbb{T}$. The Hardy space $H^2(\mathbb{T})$ is the subspace of $L^2(\mathbb{T})$ consisting of functions with vanishing negative Fourier coefficients. Let $P$ denote the orthogonal projection from $L^2(\mathbb{T})$ onto $H^2(\mathbb{T})$. For $f \in L^\infty(\mathbb{T})$, the Toeplitz operator $T_f$ and the Hankel operator $H_f$ are defined on $H^2(\mathbb{T})$ by
$$T_f h = P(fh) \quad \text{and} \quad H_f h = P U(fh),$$
where $U$ is the unitary operator defined by $U h(z) = \overline{z}\overline{h}(z)$. Clearly, \(H^*_f=H_{f^*}\), where \(f(z)^*=\overline{f}(\bar{z})\). A fundamental algebraic relation connecting these operators is that for any $f, g \in L^\infty(\mathbb{T})$, 
$$T_{fg} = T_f T_g + H_{\tilde{f}} H_g,$$
where \(\tilde{f}(z)=f(\bar{z})\).

To rigorously state the local conditions for the compactness of operator products, we must introduce the concepts of uniform algebras and their maximal ideal spaces. Let $C(X)$ denote the Banach algebra of all complex-valued continuous functions on some compact Hausdorff space $X$ equipped with the supremum norm. A \textit{uniform algebra} $A$ on $X$ is a uniformly closed subalgebra of $C(X)$ that contains the constant functions and separates the points of $X$ \cite{Douglas1973}*{Chapter 6}. The  \textit{maximal ideal space} of \(A\), denoted by $M(A)$, is the space of all non-zero multiplicative linear functionals on $A$, endowed with the Gelfand (weak-*) topology. Under this topology, $M(A)$ is a compact Hausdorff space \cite{Garnett1981}*{Chapter 10}.

In our context, $H^\infty(\mathbb{D})$ and $H^\infty + C$ (the closed linear span of $H^\infty$ and $C(\mathbb{T})$) are both uniform algebras. By Carleson's Corona Theorem, the open disk $\mathbb{D}$ can be viewed as a dense subset of $M(H^\infty)$ \cite{Garnett1981}*{Chapter 8}. Furthermore, the maximal ideal space of $H^\infty + C$ can be canonically identified with $M(H^\infty) \setminus \mathbb{D}$. The \v{S}ilov boundary of \(M(H^\infty)\) is \(M(L^\infty)\). A subset $S$ of $M(L^\infty)$ is called a \textit{support set} if it is the support of a representing measure for a functional in $M(H^\infty+C)$.

To study the commutators of inner projections in the single-variable case, our primary strategy is to decompose them as products of Hankel and Toeplitz operators. The following characterization by Chu \cite{Chu2015} serves as the fundamental tool in our proof.

\begin{theorem}[\cite{Chu2015} Theorem 1.5 \& Theorem 1.6]\label{thm_chu}
	Let \( f, g \in L^\infty(\mathbb{T})\). Then the following are equivalent:
	\begin{enumerate}
		\item the product \( K = H_f T_g \) of the Hankel operator \( H_f \) and the Toeplitz operator \( T_g \) is compact;
		\item \( H^\infty [f] \cap H^\infty [g, fg] \subset H^\infty + C \);
		\item for each support set \( S \), one of the following holds:
		\begin{enumerate}
			\item \( f|_S \in H^\infty |_S \);
			\item \( g|_S \in H^\infty |_S \) and \( (fg)|_S \in H^\infty |_S \).
		\end{enumerate}
	\end{enumerate}
\end{theorem}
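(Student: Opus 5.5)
This is Chu's theorem, which the paper quotes as a tool, so a proof only has to reconstruct the standard argument. I would organize it as (1)$\Rightarrow$(3)$\Rightarrow$(2)$\Rightarrow$(1). The main input is Douglas's localization theory for the Toeplitz algebra: an operator in the algebra generated by Toeplitz and Hankel operators with $L^\infty$ symbols is compact if and only if its localization to every support set $S$ vanishes. Throughout I will use the identity $H_fT_g = H_{fg} - T_{\tilde f}^{*}\!\!\!\!\quad$ — more precisely, the relation $T_{fg}=T_fT_g+H_{\tilde f}H_g$ together with its Hankel analogue $H_{fg}=H_fT_g+T_{\tilde f}H_g$ (after adjusting the flip conventions). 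These rewrite $H_fT_g$ as $H_{fg}-T_{\tilde f}H_g$, which is a combination of Hankel operators with coefficients that localize well.

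\textbf{(3)$\Rightarrow$(1).} Fix a support set $S$ with representing measure $\mu$. If (a) holds, then $f|_S\in H^\infty|_S$, so $H_f$ localizes to zero at $S$; by Axler's description of compact Hankel operators, the distance from $f$ to $H^\infty$ in $L^2(\mu)$ or $L^\infty(S)$ is zero. If (b) holds, then $H_g$ and $H_{fg}$ both localize to zero, and so does $H_{fg}-T_{\tilde f}H_g=H_fT_g$. A partition of $M(H^\infty+C)$ into level sets (Douglas's localization principle in the form used by Axler--Chang--Sarason and Volberg) then shows that $H_fT_g$ has zero essential norm. To make this rigorous I would follow Volberg's proof of the ACS theorem. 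That proof estimates $\|H_{\bar u}\,\cdot\,\|$ using the Sarason "local distance" $\mathrm{dist}_S(f,H^\infty|_S)$ and glues the local estimates via the compactness of $M(H^\infty+C)$ together with inner functions that are small off neighborhoods.

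\textbf{(1)$\Rightarrow$(3).} Suppose $H_fT_g$ is compact but (a) fails at some $S$. Take $m\in M(H^\infty+C)$ whose support is $S$ and a net $z_\alpha\to m$ in $\mathbb D$. Let $k_{z}$ be the normalized reproducing kernels and $\varphi_z$ the Möbius maps. Testing $H_fT_g$ on $k_{z_\alpha}$ and on $\varphi_{z_\alpha}^n k_{z_\alpha}$, compactness forces $\|H_fT_g k_{z_\alpha}\|\to0$. The composition $f\circ\varphi_{z_\alpha}$ converges weak-* in $L^\infty$ to a representative of $f|_S$ (the Hoffman fibre map). This yields $H_{f_S}T_{g_S}=0$ on the fibre model, where $f_S,g_S$ are the limit symbols. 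By the zero-product theorem for Hankel times Toeplitz (a Brown--Halmos-type argument: $H_FT_G=0$ with $H_F\neq0$ forces $\ker H_F\supset\operatorname{Ran}T_G$, and Beurling's theorem gives $\ker H_F=\theta H^2$), we conclude that $G\in H^\infty$ and $FG\in H^\infty$. That is exactly (b) on $S$.

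\textbf{(3)$\Leftrightarrow$(2).} This is pure Douglas-algebra bookkeeping. For a Douglas algebra $B\supset H^\infty$, $B\subset H^\infty+C$ if and only if $M(H^\infty+C)\subset M(B)$. Moreover $m\in M(H^\infty[f])$ if and only if $f|_{S_m}\in H^\infty|_{S_m}$ (Sarason; Chang--Marshall). The inclusion of an intersection of two Douglas algebras in $H^\infty+C$ means that every $m\in M(H^\infty+C)$ lies in $M(H^\infty[f])\cup M(H^\infty[g,fg])$, using $M(B_1\cap B_2)=M(B_1)\cup M(B_2)$ for Douglas algebras (Chang--Marshall). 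Reading this off support set by support set gives (3).

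\textbf{Main obstacle.} The hard step is the localized zero-product argument in (1)$\Rightarrow$(3), in particular making the passage to the limit symbols on a support set rigorous, including handling the case where the localized $H_{f_S}$ is nonzero but the fibre is non-trivial. The (3)$\Rightarrow$(1) gluing is also delicate, but it follows Volberg's standard template.
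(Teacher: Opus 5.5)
The paper gives no proof of this statement. It is quoted from Chu and used as a black box, so your sketch has to stand on its own.

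Some of it does. The (2)$\Leftrightarrow$(3) bookkeeping via Chang--Marshall, $M(B_1\cap B_2)=M(B_1)\cup M(B_2)$ and ``$m\in M(H^\infty[f])$ iff $f|_{S_m}\in H^\infty|_{S_m}$'' is correct. So are the identity $H_{fg}=H_fT_g+T_{\tilde f}H_g$ and your zero-product lemma for $H_FT_G$.

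The genuine gap is in (1)$\Rightarrow$(3). Conjugating by M\"obius unitaries does give $\|H_{f\circ\varphi_{z_\alpha}}T_{g\circ\varphi_{z_\alpha}}h\|\to 0$ for each fixed $h$. However, the symbols converge only weak-$*$ in $L^\infty(\mathbb{T})$. On $H^2$ (unlike the Bergman space) this gives only weak-operator convergence of the Toeplitz and Hankel factors, and such limits do not pass through products: $\bar z^{n+1}\to 0$ and $z^n\to 0$ weak-$*$, yet $H_{\bar z^{n+1}}T_{z^n}=H_{\bar z}\neq 0$ for every $n$. So $H_{f_S}T_{g_S}=0$ does not follow.

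Moreover, the weak-$*$ limit of $f\circ\varphi_{z_\alpha}$ is not a representative of $f|_S$. It is the boundary function of the bounded harmonic function obtained by integrating $f$ (viewed on $M(L^\infty)$) against the representing measures of the points $L_m(w)$ of the Gleason part of $m$. It therefore records only averages of $f$, and when $P(m)=\{m\}$ it is just the constant $\int f\,d\mu_m$, where $\mu_m$ is the representing measure of $m$. Hence even a valid identity for the limit symbols, fed into your zero-product lemma, would not tell you whether $g|_S$ and $(fg)|_S$ lie in $H^\infty|_S$. Getting from compactness to the support-set condition needs a genuinely quantitative local argument. That is exactly the difficulty Volberg had to overcome for the necessity half of the Axler--Chang--Sarason theorem; your proposal names the obstacle but does not get around it.

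The direction (3)$\Rightarrow$(1) is likewise asserted rather than proved. Douglas's localization lives over $M(QC)$, because $T_{QC}$ is central modulo the compacts. Its fibres are unions of support sets, and there is no principle turning `local vanishing at every support set' into compactness. Your sentence about partitioning $M(H^\infty+C)$ into level sets is therefore the theorem itself, not a tool.

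Note also that Volberg's paper supplied the \emph{necessity} half of the Axler--Chang--Sarason theorem. Sufficiency was proved by Axler, Chang and Sarason without gluing local estimates: starting from the Douglas-algebra condition, the Chang--Marshall theorem lets one approximate symbols by $h\bar u$ with $h\in H^\infty$ and $u$ inner, reducing the problem to pairs of inner functions. The natural analogue here starts from (2):
\begin{itemize}
\item approximate $f$ by $h_1\bar u$, and $g$, $fg$ by $h_2\bar v$, $h_3\bar v$, with $\bar u\in H^\infty[f]$ and $\bar v\in H^\infty[g,fg]$, so that $H^\infty[\bar u]\cap H^\infty[\bar v]\subset H^\infty+C$;
\item then prove the resulting compactness statement for these inner-function data.
\end{itemize}
Nothing in your sketch carries out this step, so neither nontrivial implication is actually established.
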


\subsection{Main result}
Our main result characterizes the compactness of the commutators of two inner projections on the Hardy space over the unit disk.

\begin{theorem}\label{thm_A}
	Let \(\varphi\) and \(\psi\) be two inner functions in \(H^\infty(\D)\), \(P_{\mathcal{Q}_\varphi}\) and \(P_{\mathcal{Q}_\psi}\) be the corresponding inner projections generated by \(\varphi\) and \(\psi\) respectively. The following statements are equivalent:
	\begin{enumerate}
		\item \([P_{\mathcal{Q}_\varphi},P_{\mathcal{Q}_\psi}]\) is compact;
		\item \(H^\infty[\overline{\varphi}\psi]\cap H^\infty[\varphi\overline{\psi}]\subset H^\infty+C\);
		\item for each support set \(S\), either \(\varphi\overline{\psi}|_S\in H^\infty|_S\) or \(\overline{\varphi}\psi|_S\in H^\infty|_S\).
	\end{enumerate}
\end{theorem}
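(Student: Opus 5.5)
The plan is to reduce compactness of the commutator to compactness of a single Hankel--Toeplitz product $H_fT_g$, and then read off (2) and (3) from Theorem \ref{thm_chu}.

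\textbf{Step 1: reduction to one corner.} Write $P=P_{\mathcal{Q}_\varphi}$ and $Q=P_{\mathcal{Q}_\psi}$. Since $[P,Q]=[I-P,I-Q]$, we may equally work with $P_\varphi$ and $P_\psi$. Decompose
$$[P,Q]=PQ(I-P)-(I-P)QP.$$
The two pieces act between the orthogonal subspaces $\operatorname{Ran}P$ and $\operatorname{Ran}(I-P)$, so $[P,Q]$ is compact if and only if $PQ(I-P)$ is compact. Taking adjoints, this is equivalent to compactness of $(I-P)QP$. Replacing $Q$ by $I-Q$ changes this operator only by the sign of $(I-P)P=0$, so it is also equivalent to compactness of
$$X:=P_{\mathcal{Q}_\varphi}P_\psi P_\varphi .$$

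\textbf{Step 2: rewrite with Hankel operators.} Apply the identity $T_{fg}=T_fT_g+H_{\tilde f}H_g$ with $f=\varphi$, $g=\overline{\varphi}$, and use $H_{\overline\varphi}^*=H_{\tilde\varphi}$. This gives
$$P_{\mathcal{Q}_\varphi}=I-T_\varphi T_{\overline\varphi}=H_{\overline\varphi}^*H_{\overline\varphi},$$
so $H_{\overline\varphi}$ is a partial isometry with initial projection $P_{\mathcal{Q}_\varphi}$. Consequently $X$ is compact if and only if $H_{\overline\varphi}P_\psi P_\varphi$ is compact.

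Now expand $P_\psi P_\varphi=T_\psi T_{\overline\psi}T_\varphi T_{\overline\varphi}$ and use three standard facts:
\begin{itemize}
\item $H_{\overline\varphi}T_\psi=H_{\overline\varphi\psi}$, because $\psi$ is analytic;
\item $T_{\overline\psi}T_\varphi=T_{\overline\psi\varphi}$, because the left symbol is coanalytic;
\item $T_{\overline\varphi}T_\varphi=I$.
\end{itemize}
With these, $Z:=H_{\overline\varphi}P_\psi P_\varphi=H_{\overline\varphi\psi}T_{\varphi\overline\psi}T_{\overline\varphi}$ and $ZT_\varphi=H_{\overline\varphi\psi}T_{\varphi\overline\psi}$. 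Hence $Z=(ZT_\varphi)T_{\overline\varphi}$, and $Z$ is compact if and only if
$$H_{\overline\varphi\psi}\,T_{\varphi\overline\psi}$$
is compact. This bookkeeping step is where I expect most of the care to be needed. One must track which side each analytic or coanalytic symbol sits on, so that each Toeplitz product collapses legitimately. One must also check that each partial isometry or isometry being stripped off really preserves compactness in both directions.

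\textbf{Step 3: apply Theorem \ref{thm_chu}.} Take $f=\overline\varphi\psi$ and $g=\varphi\overline\psi$. Then $fg=|\varphi|^2|\psi|^2=1$ a.e. on $\mathbb{T}$, so $H^\infty[g,fg]=H^\infty[\varphi\overline\psi]$. Condition (2) of Theorem \ref{thm_chu} therefore becomes exactly
$$H^\infty[\overline\varphi\psi]\cap H^\infty[\varphi\overline\psi]\subset H^\infty+C,$$
which gives (1)$\Leftrightarrow$(2). In condition (3) of Theorem \ref{thm_chu}, the requirement $(fg)|_S\in H^\infty|_S$ holds automatically since $fg=1$. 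So on each support set $S$ the condition reads: either $\overline\varphi\psi|_S\in H^\infty|_S$ or $\varphi\overline\psi|_S\in H^\infty|_S$. This is (3), and completes the chain of equivalences.
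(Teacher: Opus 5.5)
Your proposal is correct and follows the paper's route: reduce to the corner $P_{\mathcal{Q}_\varphi}P_\psi P_\varphi$, turn it into the Hankel--Toeplitz product $H_{\overline{\varphi}\psi}T_{\varphi\overline{\psi}}$, and apply Theorem \ref{thm_chu} using $fg=1$. The only difference is in the middle step. The paper passes through $A^*T_\psi=T_\phi H_\phi^*H_\phi$ (with $\phi=\overline{\varphi}\psi$) and a $C^*$-identity argument, while your factorization $P_{\mathcal{Q}_\varphi}=H_{\overline{\varphi}}^*H_{\overline{\varphi}}$ collapses the Toeplitz products directly, which is somewhat cleaner.
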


\begin{proof}
It is easy to see that \([P_{\mathcal{Q}_{\varphi}},P_{\mathcal{Q}_{\psi}}]=[P_{\varphi},P_{\psi}].\)
Let $K:= [P_{\varphi}, P_{\psi}] = P_{\varphi} P_{\psi} - P_{\psi} P_{\varphi}$. 
With the decomposition \(H^2(\mathbb{D})=\varphi H^2(\mathbb{D})\oplus\mathcal{Q}_{\varphi}\), write $K = P_{\varphi} P_{\psi} P_{\varphi}^\perp - P_{\varphi}^\perp P_{\psi} P_{\varphi}$, where $P_{\varphi}^\perp = I - P_{\varphi}$. Notice that \(P_{\varphi} P_{\psi} P_{\varphi}^\perp\) is a block upper triangular matrix and 
\[
(P_{\varphi}^\perp P_{\psi} P_{\varphi})^*=P_{\varphi} P_{\psi} P_{\varphi}^\perp.
\]
Hence, $K$ is compact if and only if $P_{\varphi}^\perp P_{\psi} P_{\varphi}$ is compact. Let \(A:=P_\varphi^\perp P_\psi T_\varphi\), if \(AT_\psi^*\) is compact. Then
\[
	AA^*=T^*_\varphi P_\psi P^\perp_\varphi P_\psi T_\varphi=(A^*T_\psi)T^*_\psi T_\varphi
	\]
is compact. This implies that \(A\) is compact. Since $P_\varphi=T_\varphi T_{\varphi}^*$, we have
\[
	A=AT_\varphi^*T_\varphi=P_{\varphi}^\perp P_{\psi} P_{\varphi}T_\varphi.
	\]
Then \(P_{\varphi}^\perp P_{\psi} P_{\varphi}\) is compact implies \(A\) is compact. Therefore, \(P_\varphi^\perp P_\psi P_\varphi\) is compact if and only if \(A^*T_\psi\) is compact.

Following a direct computation,
\[
\begin{aligned}
	A^*T_\psi &=T_{\varphi}^* P_{\psi} P_{\varphi}^\perp T_{\psi}\\ &= T_{\varphi}^* T_{\psi} T_{\psi}^* (I - T_{\varphi} T_{\varphi}^*) T_{\psi} \\
	&= T_{\varphi}^* T_{\psi} - (T_{\varphi}^* T_{\psi})(T_{\psi}^* T_{\varphi})(T_{\varphi}^* T_{\psi}) \\
	&= T_{\overline{\varphi}\psi} - T_{\overline{\varphi}\psi} T_{\varphi\overline{\psi}} T_{\overline{\varphi}\psi}.
\end{aligned}
\]

For simplicity, set \(\phi:=\overline{\varphi}\psi\). Since \(|\phi|=1\) on \(\mathbb{T}\) almost everywhere, we have
\[
T_{\overline{\phi}} T_\phi+ H_{\overline{\tilde{\phi}}}H_\phi=T^*_\phi T_\phi+ H^*_\phi H_\phi=I,
\] 
and then
\[
T_\phi(I-T_\phi^*T_\phi)=T_\phi H_\phi^*H_\phi.
\]
We claim that \(T_\phi H_\phi^*H_\phi\) is compact if and only if \(H_\phi T_\phi^*\) is compact. In fact, for any \(f\in L^\infty(\mathbb{T})\), the compactness of \(T_f H_f^*H_f\) implies that \(T_f H_f^*H_fT_f^*\) is compact. Since \(A^*A\) is compact if and only if \(A\) is compact, we obtain that \(T_fH_f^*\) is compact.

We can conclude the equivalence of items (1), (2), and (3) directly by Theorem \ref{thm_chu}. Substituting $\phi = \overline{\varphi}\psi$ and noting that $\overline{\phi}\phi = |\phi|^2 = 1$ a.e. on $\mathbb{T}$, this is exactly 
$$
H^\infty[\overline{\varphi}\psi] \cap H^\infty[\varphi\overline{\psi}]=H^\infty[\overline{\varphi}\psi] \cap H^\infty[\varphi\overline{\psi}] \subset H^\infty + C.
$$ 
Since $1\in H^\infty|_S$ for each support set $S$. Then for each support set $S$, either $\phi|_S \in H^\infty|_S$ or $\overline{\phi}|_S \in H^\infty|_S$.
\end{proof}

\subsection{Further Discussion of Theorem \ref{thm_A}}
One expects the class of inner-function pairs with compact projection commutators to be strictly larger than that with compact projection products. A trivial source of such examples arises from a common inner factor: if \(\varphi\) and \(\psi\) share a nontrivial inner factor \(\theta\) that is not a finite Blaschke product, then \([P_{\mathcal{Q}_{\varphi}}, P_{\mathcal{Q}_{\psi}}]\) is compact, whereas \(P_{\mathcal{Q}_{\varphi}} P_{\mathcal{Q}_{\psi}}\) fails to be compact. It is therefore natural to ask whether there exist more general, nontrivial examples beyond this common factor construction.
Before presenting such a nontrivial example, we need some definitions and well-known results.

A sequence \(\{z_n\}_{n=1}^\infty\) in the open unit disk \(\mathbb{D}\) is called an \textit{interpolating sequence} for \(H^\infty(\mathbb{D})\) if for every bounded sequence \(\{w_n\}_{n=1}^\infty \in \ell^\infty\), there exists a bounded analytic function \(f \in H^\infty(\mathbb{D})\) such that
\[
f(z_n) = w_n \quad \text{for all } n = 1,2,\dots.
\]
The discrete positive measure $\mu$ induced by $\{z_n\}_{n=1}^\infty$, defined as $\mu = \sum_{n=1}^\infty (1 - |z_n|^2) \delta_{z_n}$, where $\delta_{z_n}$ is the Dirac measure at $z_n$, is called a \textit{Carleson measure} if there exists a constant $C > 0$ such that $\mu(S(I)) \le C|I|$ for every subarc $I \subset \mathbb{T}$. Here, $|I|$ denotes the normalized arc length of $I$, and $S(I) = \left\{ z \in \mathbb{D} : z/|z| \in I, 1 - |z| \le |I| \right\}$. Carleson's interpolation theorem \cite{Garnett1981}*{Chapter 7} states that a sequence $\{z_n\}_{n=1}^\infty \subset \mathbb{D}$ is an $H^\infty$ interpolation sequence if and only if there exists a constant $\delta>0$ such that for every $n$,
\[
\prod_{k \neq n} \rho(z_n, z_k) \ge \delta,
\]
where $\rho(z,w)$ denotes the pseudo-hyperbolic distance on $\mathbb{D}$. Equivalently, the sequence is uniformly separated (i.e., $\inf_{i\neq j}\rho(z_i,z_j)>0$) and the measure $\mu = \sum_{n=1}^\infty (1-|z_n|^2)\delta_{z_n}$ is a Carleson measure.

A Blaschke product is called an \textit{interpolating Blaschke product} if it has
distinct zeros and if these zeros form an interpolating sequence.

The following theorem gives the characterization for the compactness of \(P_{\mathcal{Q}_{\varphi}}P_{\mathcal{Q}_{\psi}}\).
\begin{theorem}[\cite{ACS1978} Axler, Chang and Sarason]\label{thm_ACS}
    Let \(\varphi\) and \(\psi\) are two inner functions. Then the following are equivalent:
    \begin{enumerate}
        \item \(P_{\mathcal{Q}_{\varphi}}P_{\mathcal{Q}_{\psi}}\) is compact;
        \item \(H^\infty[\overline{\varphi}]\cap H^\infty[\overline{\psi}]\subseteq H^\infty+C\);
        \item for each support set \(S\), either \(\overline{\varphi}|_S\in H^\infty|_S\) or \(\overline{\psi}|_S\in H^\infty|_S\);
        \item \(\lim_{|z|\to 1}\max \{|\varphi(z)|,|\psi(z)|\}=1\).
    \end{enumerate}
\end{theorem}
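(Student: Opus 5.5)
Since this is a known result of Axler, Chang and Sarason, I would reprove it by reduction to Theorem \ref{thm_chu} and the structure of Douglas algebras. The plan is to mimic the proof of Theorem \ref{thm_A}.

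\textbf{Step 1: reduce (1) to a Hankel--Toeplitz product.} First I would write $P_{\mathcal{Q}_\varphi}=I-T_\varphi T_{\overline{\varphi}}=H_{\varphi}^*H_{\varphi}$. This uses $T_{\overline\varphi\varphi}=I=T_{\overline\varphi}T_\varphi+H^*_{\varphi}H_\varphi$ and the analyticity of $\varphi$; the same identities appear in the proof of Theorem \ref{thm_A}. Next, $P_{\mathcal{Q}_\varphi}P_{\mathcal{Q}_\psi}$ is compact if and only if $P_{\mathcal{Q}_\psi}P_{\mathcal{Q}_\varphi}P_{\mathcal{Q}_\psi}$ is compact, via $AA^*$. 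Using $P_{\mathcal{Q}_\varphi}=H_{\varphi}^*H_\varphi$, this holds if and only if $H_\varphi P_{\mathcal{Q}_\psi}$ is compact. Since $P_{\mathcal{Q}_\psi}=I-T_\psi T_{\overline\psi}$, we get
\[
H_\varphi P_{\mathcal{Q}_\psi}=H_\varphi-H_\varphi T_\psi T_{\overline\psi}.
\]
I would aim for the cleaner operator $H_{\varphi\overline\psi}T_{\psi}$, or an equivalent one; the precise choice of factors is a routine computation.

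\textbf{Step 2: apply Theorem \ref{thm_chu}.} Once (1) is expressed as compactness of $H_fT_g$ with $\{f,g\}$ built from $\overline\varphi,\overline\psi$, Theorem \ref{thm_chu} gives a Douglas-algebra condition. Because $H^\infty[\overline\varphi\psi,\dots]$-type algebras simplify when unimodular inner factors cancel, this condition should reduce to $H^\infty[\overline\varphi]\cap H^\infty[\overline\psi]\subseteq H^\infty+C$, which is (2). Part (3) of Theorem \ref{thm_chu} localizes it to the support-set statement (3), since $\psi\in H^\infty$ already restricts to $H^\infty|_S$. I expect the bookkeeping of which product to feed into Theorem \ref{thm_chu} to be the fiddly part, but not the conceptual obstacle.

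\textbf{Step 3: (3)$\Leftrightarrow$(4), the main obstacle.} This needs the following fact: for an inner $\varphi$ and a support set $S$ of $m\in M(H^\infty+C)$, $\overline\varphi|_S\in H^\infty|_S$ if and only if $|\varphi(m)|=1$. One direction is clean. If $|\varphi(m)|=1$, then $\varphi$ is a unimodular constant on $S$, because the representing measure is multiplicative on $H^\infty$ and $|\varphi|=1$ on $S$. Hence $\overline\varphi|_S$ is constant.

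For the converse, suppose $\overline\varphi|_S=h|_S$ with $h\in H^\infty$. Then $\varphi h=1$ on $S$, so $\varphi(m)h(m)=1$, and $\varphi(m)\neq 0$. From here I would use that $\varphi|_S$ is an inner function on $S$ whose conjugate is analytic there, hence constant, using the antisymmetry of $H^\infty|_S$ on support sets. This gives $|\varphi(m)|=1$.

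Then (3) says $\max(|\varphi(m)|,|\psi(m)|)=1$ for every $m\in M(H^\infty)\setminus\D$. By the Corona theorem, $\D$ is dense in $M(H^\infty)$ and $M(H^\infty)\setminus\D$ is its boundary. Continuity of Gelfand transforms then shows this is exactly $\lim_{|z|\to1}\max\{|\varphi(z)|,|\psi(z)|\}=1$.
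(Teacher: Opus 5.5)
The paper gives no proof of this statement (it is quoted from Axler--Chang--Sarason), so your argument has to stand on its own. Step 3 is fine. On the support set $S$ of $m\in M(H^\infty+C)$, with representing measure $\mu_m$, one has $\overline\varphi|_S\in H^\infty|_S$ iff $|\varphi(m)|=1$. One direction follows from equality in $|\int_S\varphi\,d\mu_m|\le 1$, the other from antisymmetry of $H^\infty|_S$, and the corona theorem then converts (3) into (4).

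The gap is in Steps 1--2. These carry the actual content of the reduction to Theorem~\ref{thm_chu}, and as written they fail.

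\begin{itemize}
\item Since $\varphi$ is analytic, $H_\varphi=0$. The identity you quote, $T_{\overline\varphi}T_\varphi+H_\varphi^*H_\varphi=I$, is trivial because $T_{\overline\varphi}T_\varphi=I$. So $P_{\mathcal{Q}_\varphi}=H_\varphi^*H_\varphi$ is false. The correct formula comes from $I=T_{\varphi\overline\varphi}=T_\varphi T_{\overline\varphi}+H_{\overline\varphi}^*H_{\overline\varphi}$, which gives $P_{\mathcal{Q}_\varphi}=H_{\overline\varphi}^*H_{\overline\varphi}$.
\item Your target $H_{\varphi\overline\psi}T_\psi$ equals $H_\varphi=0$, so it is always compact. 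Fed into Theorem~\ref{thm_chu}, it would make every $P_{\mathcal{Q}_\varphi}P_{\mathcal{Q}_\psi}$ compact. That already fails for $\varphi=\psi$ an infinite Blaschke product.
\item The conjugated guess $H_{\overline\varphi\psi}T_{\overline\psi}$ does not work either: Theorem~\ref{thm_chu} turns it into the condition $\overline\varphi\psi\in H^\infty+C$.
\end{itemize}

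What is missing is a genuine identity that trades one Hankel factor for a Toeplitz factor; this is not a routine choice of factors. Here is one way to supply it.

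\begin{itemize}
\item $H_{\overline\varphi}$ and $H_{\overline\psi}$ are partial isometries with initial spaces $\mathcal{Q}_\varphi$ and $\mathcal{Q}_\psi$. Hence (1) is equivalent to compactness of the Hankel--Hankel product $H_{\overline\varphi}H_{\overline\psi}^*$.
\item From $H_{\overline\varphi}T_\varphi=0$ and $T_{\varphi\overline\psi}=T_\varphi T_{\overline\psi}+H_{\overline\varphi}^*H_{\overline\psi}$ you get $H_{\overline\varphi}T_{\varphi\overline\psi}=(H_{\overline\varphi}H_{\overline\varphi}^*)H_{\overline\psi}$. This is compact iff $H_{\overline\varphi}^*H_{\overline\psi}$ is compact.
\item Let $C(\sum a_nz^n)=\sum\overline{a_n}z^n$. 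Hankel matrices are symmetric, so $CH_fC=H_f^*$, and therefore $H_{\overline\varphi}^*H_{\overline\psi}=C(H_{\overline\varphi}H_{\overline\psi}^*)C$. This links the previous two items.
\item Now apply Theorem~\ref{thm_chu} with $f=\overline\varphi$, $g=\varphi\overline\psi$, $fg=\overline\psi$. It yields $H^\infty[\overline\varphi]\cap H^\infty[\varphi\overline\psi,\overline\psi]=H^\infty[\overline\varphi]\cap H^\infty[\overline\psi]\subseteq H^\infty+C$, which is (2).
\item The local form is exactly (3), since $\overline\psi|_S\in H^\infty|_S$ already forces $\varphi\overline\psi|_S\in H^\infty|_S$.
\end{itemize}

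With this step supplied, your outline goes through. Without it, Step 2 is only an expectation.
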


We now present an example of two relatively prime inner functions for which the commutator of the corresponding inner projections is compact, whereas their product is not compact.

\begin{example}\label{exm_1}
	Let \(B_1\) and \(B_2\) be two infinite Blaschke products with zero sequences 
	$$ a_n = 1 - 2^{-n}, \quad b_n = 1 - 2^{-n}\left(1 + \frac{1}{n}\right),\qquad n\geq 1$$
	respectively. Then we claim that
	\begin{itemize}
		\item[1.] the product \(P_{B_1}P_{B_2}\) is not compact;
		\item[2.] the commutator \([P_{B_1},P_{B_2}]\) is compact.
	\end{itemize}
\end{example}
Clearly, $B_1$ and $B_2$ have no common zeros and are strictly relatively prime.  Moreover, we have
$$ \lim_{n \to \infty} \rho(a_n, b_n) = \lim_{n \to \infty} \frac{\frac{1}{n}}{2 + \frac{1}{n}} = 0. $$
Obviously, $\lim_{n\to\infty}B_1(a_n) = 0$, and the Schwarz-Pick lemma implies that $|B_2(a_n)| \le \rho(a_n, b_n)$. Therefore,
$$ \lim_{n\to\infty} \max(|B_1(a_n)|, |B_2(a_n)|) = \lim_{n\to\infty} |B_2(a_n)| = 0 \neq 1, $$
and \(P_{B_1}P_{B_2}\) is not compact by condition (4) in Theorem \ref{thm_ACS}.

Since both \(B_1\) and \(B_2\) are interpolating Blaschke products with only one accumulate point at \(1\), and \(\lim_{n\to\infty}\rho(a_n,b_n)=0\). The zeros of \(B_1\) can be considered as a small perturbation of zeros of \(B_2\), then \(B_2/B_1\in H^\infty+C\)(see \cite{Guillory1981} for details). Therefore, \([P_{B_1},P_{B_2}]\) is compact by condition (2) in Theorem \ref{thm_A}.

For a pair of relatively prime inner functions, the condition (4) in Theorem \ref{thm_ACS} implies Carleson's Corona condition (see \cite{Garnett1981} for the details of Carleson's Corona theorem). The following proposition clarifies their precise relationship.

\begin{proposition} \label{prop_1}
	Let $\varphi$ and $\psi$ be relatively prime inner functions in $H^\infty(\mathbb{D})$. Conditions (S) and (C) are defined as:
	\begin{itemize}
		\item[(S)] $\lim_{|z| \to 1} \max\{|\varphi(z)|, |\psi(z)|\} = 1$;
		\item[(C)] $\inf_{z \in \mathbb{D}} (|\varphi(z)| + |\psi(z)|) > 0$.
	\end{itemize}
	Then (S) implies (C), but the converse is false.
\end{proposition}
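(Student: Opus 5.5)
To show that (S) implies (C), I will argue by compactness, splitting $\mathbb{D}$ into a neighbourhood of the circle and a compact disk. By (S) there is $r\in(0,1)$ such that $\max\{|\varphi(z)|,|\psi(z)|\}>1/2$ for $r<|z|<1$. On this annulus, therefore, $|\varphi(z)|+|\psi(z)|>1/2$. On the compact disk $\{|z|\le r\}$ the function $|\varphi|+|\psi|$ is continuous. It has no zero there: a zero would be a common zero $z_0$ of $\varphi$ and $\psi$, and then the Blaschke factor at $z_0$ would be a common nontrivial inner factor, contradicting relative primeness. Hence $|\varphi|+|\psi|$ attains a positive minimum on $\{|z|\le r\}$, and combining the two regions gives (C).

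For the failure of the converse, I will produce relatively prime inner functions that satisfy (C) but not (S). Singular inner functions have no zeros, so they automatically avoid common zeros, and I will use them. Take $\varphi(z)=\exp\!\big(-\tfrac{1+z}{1-z}\big)$ and $\psi(z)=\exp\!\big(-\tfrac{1-z}{1+z}\big)$, the singular inner functions with point masses at $1$ and at $-1$. These are relatively prime, since their singular measures are mutually singular and hence they have no common nonconstant inner factor.

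Condition (S) fails. Along the radius $z=t\to1^-$ we have $\varphi(t)\to0$, while $\psi(t)\to e^{0}=1$; this alone does not break (S). Instead I use the horocycles at $1$: for $\operatorname{Re}\frac{1+z}{1-z}=c$ with $c>0$ fixed, $|\varphi(z)|=e^{-c}$. Letting $z\to1$ along that horocycle sends $\frac{1-z}{1+z}\to0$, so $|\psi(z)|\to1$ and the max still tends to $1$. So both functions must be small simultaneously near the boundary, which needs a common boundary mass point. I therefore switch to $\psi(z)=\exp\!\big(-2\tfrac{1+z}{1-z}\big)=\varphi^2$, which shares an inner factor, and that is not allowed either.

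The construction I will actually use is with Blaschke products. Let $\varphi$ have zeros $a_n$ and $\psi$ have zeros $b_n$ with $\rho(a_n,b_n)$ bounded below by some $\delta>0$ but not tending to $1$, both sequences interpolating and accumulating only at $1$. Concretely, take $a_n=1-2^{-n}$ and $b_n=1-3\cdot 2^{-n-1}$, interlaced. Then $|\varphi(a_n)|=0$, and $|\psi(a_n)|$ is bounded above by $\rho(a_n,b_n)$, which is a constant less than $1$. Hence $\max\{|\varphi(a_n)|,|\psi(a_n)|\}\not\to1$, so (S) fails.

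For (C), I need $|\varphi|+|\psi|$ bounded below. The key fact is that an interpolating Blaschke product $B$ with zeros $\{z_k\}$ satisfies $|B(z)|\ge c\,\rho(z,\{z_k\})$ for some $c>0$. Near any zero of $\varphi$ we are at pseudo-hyperbolic distance at least about $\delta/2$ from the zeros of $\psi$, and symmetrically, so at every point of $\mathbb{D}$ one of the two distances is at least $\delta/2$. This gives $|\varphi|+|\psi|\ge c\delta/2$. The two functions are relatively prime because they have distinct zeros and no singular parts.

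The main obstacle is to verify that the interlaced sequences are uniformly separated from each other and that the union is still separated. This reduces to the explicit computation $\rho(1-2^{-n},1-3\cdot2^{-n-1})\to 1/7$, together with a direct check that the sequences are interpolating.
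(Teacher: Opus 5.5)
Your proof is correct and follows the paper's route: the same compactness split for (S)$\Rightarrow$(C), and for the converse a pair of interlaced radial interpolating Blaschke products whose zeros stay a fixed pseudo-hyperbolic distance apart. The paper uses $1-4^{-n}$ and $1-2\cdot 4^{-n}$ and only asserts that (C) follows because the union is interpolating; your bound $|B(z)|\ge c\,\rho(z,Z)$, combined with cross-separation, supplies that missing step. Two cosmetic fixes: delete the singular-inner-function false start (for that pair (S) actually holds), and note that $\rho(a_n,b_n)\to 1/5$, whereas $1/7$ is the limit of $\rho(a_{n-1},b_n)$ and is the cross-separation constant over all pairs $(a_n,b_m)$ that you actually need.
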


\begin{proof}
	(S) $\Rightarrow$ (C): By $(S)$, there exists $r \in (0,1)$ such that $\max\{|\varphi(z)|, |\psi(z)|\} > 1/2$ for $r < |z| < 1$. Thus $|\varphi| + |\psi| > 1/2$ in this annulus. On the compact disk $\{|z| \le r\}$, since $\varphi$ and $\psi$ are relatively prime, they have no common zeros. By continuity, $|\varphi| + |\psi|$ attains a positive minimum $\delta > 0$. Taking the minimum of $\{1/2, \delta\}$ yields the global lower bound for (C).
	
	(C) $\nRightarrow$ (S): Consider the interpolating sequences $a_n = 1 - 4^{-n}$ and $b_n = 1 - 2 \cdot 4^{-n}$. Let \(B_1\) and \(B_2\) be the corresponding Blaschke products. It is easy to check that \(\{a_n\}\cup\{b_n\}\) is also an interpolating sequence, which implies that (C) holds for $B_1$ and $B_2$. However, 
	$$ \lim_{n \to \infty} \max\{|B_1(a_n)|, |B_2(a_n)|\} = \lim_{n \to \infty} |B_2(a_n)| \le \lim_{n \to \infty} \rho(a_n, b_n) = 1/3 \neq 1. $$
	Thus (S) fails.
\end{proof}

We will later show that, for relatively prime inner functions, compactness of projection products and of commutators are equivalent under condition $(C)$. Moreover, this equivalence extends to general inner functions under a strictly weaker boundary condition.

Let $(WC)$ be the weaker Carleson's condition defined by:
$$(WC)\quad \liminf_{|z| \to 1} (|\varphi(z)| + |\psi(z)|) > 0. $$

\begin{proposition} 
Let $\varphi$ and $\psi$ be two inner functions satisfying condition $(WC)$. Then the commutator $[P_{\varphi}, P_{\psi}]$ is compact if and only if the projection product $P_{\mathcal{Q}_\varphi} P_{\mathcal{Q}_\psi}$ is compact.
\end{proposition}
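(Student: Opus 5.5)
The plan is to read both compactness conditions through the local descriptions on support sets. For the product we use condition (3) of Theorem \ref{thm_ACS}. For the commutator we use condition (3) of Theorem \ref{thm_A}, recalling that $[P_{\varphi},P_{\psi}]=[P_{\mathcal{Q}_\varphi},P_{\mathcal{Q}_\psi}]$.

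\emph{Product compact $\Rightarrow$ commutator compact.} This direction should not need $(WC)$. Fix a support set $S$. If $\overline{\varphi}|_S\in H^\infty|_S$, then $\overline{\varphi}\psi|_S\in H^\infty|_S$, because $\psi\in H^\infty$ and $H^\infty|_S$ is an algebra. Symmetrically, if $\overline{\psi}|_S\in H^\infty|_S$, then $\varphi\overline{\psi}|_S\in H^\infty|_S$. Hence condition (3) of Theorem \ref{thm_ACS} implies condition (3) of Theorem \ref{thm_A}.

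\emph{Commutator compact $\Rightarrow$ product compact.} Let $S$ be the support of the representing measure $\mu$ of some $m\in M(H^\infty+C)$. First I would prove a standard lemma for an inner $f$:
\[
\overline{f}|_S\in H^\infty|_S \iff |f(m)|=1 .
\]
For $(\Leftarrow)$, compute $\int|f-f(m)|^2\,d\mu=1-|f(m)|^2=0$, so $f$ is constant on $S$. For $(\Rightarrow)$, if $\overline f=g$ on $S$ with $g\in H^\infty$, then $f(m)g(m)=1$ and $|g(m)|\le\|g|_S\|_\infty=1$, so $|f(m)|\ge 1$.

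Next, $(WC)$ together with the density of $\mathbb{D}$ in $M(H^\infty)$ (Corona theorem) gives $|\varphi(m)|+|\psi(m)|\ge c>0$ for every $m\in M(H^\infty)\setminus\mathbb{D}$. Indeed, $m$ is a limit of a net $z_\alpha$ with $|z_\alpha|\to1$. Now suppose, say, $\varphi\overline{\psi}|_S=u|_S$ with $u\in H^\infty$; the other case is symmetric. Then $\varphi=u\psi$ on $S$, with $|u|=1$ on $S$. We must show $|\varphi(m)|=1$ or $|\psi(m)|=1$, so suppose both are $<1$.

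By Hoffman's theory, $m$ then lies in a nontrivial Gleason part, with Hoffman map $L_m:\mathbb{D}\to M(H^\infty)$, $L_m(0)=m$. Every point of that part has a representing measure carried by the same $S$, and these measures are mutually absolutely continuous. Pull back to $\varphi_m=\varphi\circ L_m$, $\psi_m=\psi\circ L_m$ and $u_m=u\circ L_m$, all in $H^\infty(\mathbb{D})$. The identity $\varphi=u\psi$ on $S$ gives $\varphi_m=u_m\psi_m$ on $\mathbb{D}$, and $|u_m|\le1$, so $|\varphi_m|\le|\psi_m|$. By $(WC)$, $2|\psi_m|\ge|\varphi_m|+|\psi_m|\ge c$ on $\mathbb{D}$. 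If $\psi_m$ is inner, an inner function bounded below on $\mathbb{D}$ is a unimodular constant, so $|\psi(m)|=|\psi_m(0)|=1$, a contradiction. Then the lemma gives $\overline{\psi}|_S\in H^\infty|_S$, and condition (3) of Theorem \ref{thm_ACS} holds.

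\emph{Main obstacle.} The delicate points are in this last step. I must justify that $\psi\circ L_m$ is inner, i.e.\ that unimodularity on $S$ passes to boundary values of the pulled-back function; I would argue this via the common support $S$ of the measures in the part. I must also handle $m$ in a trivial (one-point) Gleason part. For the trivial case I would first try to show $\mu$ is then multiplicative on $L^\infty$ restricted to $S$. Alternatively I would use Hoffman's result that $|f(m)|<1$ for an interpolating Blaschke product $f$ forces a nontrivial part, together with Frostman's approximation of inner functions by Blaschke products, to reduce to the nontrivial case.
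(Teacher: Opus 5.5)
Your easy direction is correct, and so is your lemma that $\overline{f}|_S\in H^\infty|_S$ iff $|f(m)|=1$ for inner $f$. The consequence $|\varphi|+|\psi|\ge c$ on $M(H^\infty)\setminus\mathbb{D}$ of $(WC)$ is also correct.

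\textbf{The gap.} It is the step you flag yourself, and it is worse than unproved. The claim that $\psi\circ L_m$ is inner is false in general, and it cannot be derived from $|\psi|=1$ on $S$. Two examples:
\begin{itemize}
\item Take $\psi(z)=e^{-(1+z)/(1-z)}$ and $m$ a cluster point of the interpolating sequence $\{1-2^{-j}\}$, so $m$ lies in a nontrivial part. In half-plane coordinates $\tau_r$ is the dilation by $(1+r)/(1-r)$, so $\psi\circ L_m\equiv 0$.
\item Take Heins' universal Blaschke product $B$. Every function in the closed unit ball of $H^\infty$ is a locally uniform limit of compositions of $B$ with disc automorphisms. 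Hence, for instance, $B\circ L_m(z)=(2+e^{i\theta}z)/4$ for some $m$, necessarily in a nontrivial part. This function is bounded below but not inner, even though $|B|=1$ on $S$.
\end{itemize}
The only facts you extract on $P(m)$ are $\varphi_m=u_m\psi_m$, $|u_m|\le 1$ and $|\varphi_m|+|\psi_m|\ge c$. These are all satisfied by $\varphi_m\equiv\psi_m\equiv 1/2$ and $u_m\equiv 1$, so they cannot force $|\psi(m)|=1$. Separately, the one-point parts not contained in $M(L^\infty)$ are left untreated.

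\textbf{The missing ingredient.} You need a global, algebraic use of $(WC)$. The paper turns $(WC)$ into a Bezout identity $a\varphi+b\psi=B$, with $a,b\in H^\infty$ and $B$ a finite Blaschke product. This comes from confining the common zeros to $|z|\le r$ and then applying the corona theorem. Now take a support set $S$ with $\varphi\overline{\psi}|_S=u|_S$. Multiplying the identity by $\overline{\psi}$ on $S$ gives $B\overline{\psi}|_S=(au+b)|_S\in H^\infty|_S$. Since $B$ is continuous, it is a unimodular constant on $S$, so $\overline{\psi}|_S\in H^\infty|_S$. This handles every support set at once, with no Gleason-part analysis.

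Starting from what you already proved, there is an even shorter route:
\begin{itemize}
\item The bound $|\varphi|+|\psi|\ge c$ on $M(H^\infty+C)$ means no maximal ideal of $H^\infty+C$ contains both $\varphi$ and $\psi$.
\item Hence $a\varphi+b\psi=1$ for some $a,b\in H^\infty+C$.
\item Continuous functions are constant on support sets, so $(H^\infty+C)|_S=H^\infty|_S$.
\item Restricting to $S$ then gives $\overline{\psi}|_S=(au+b)|_S\in H^\infty|_S$.
\end{itemize}
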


\begin{proof}
Assume that the commutator $[P_{\varphi}, P_{\psi}]$ is compact. By Theorem \ref{thm_A}, for every support set $S$ of $M(H^\infty + C)$, we have either $\overline{\varphi}\psi|_S \in H^\infty|_S$ or $\varphi\overline{\psi}|_S \in H^\infty|_S$.

If $\varphi$ and $\psi$ satisfy condition $(WC)$, there exists an annulus $r < |z| < 1$ and a constant $\delta_0 > 0$ such that 
$$ \inf_{r < |z| < 1} (|\varphi(z)| + |\psi(z)|) > \delta_0. $$
Since the functions are bounded away from zero near the boundary, their zero sets can only intersect in the compact disk $\{|z| \le r\}$. %meaning they share at most finitely many common zeros. Consequently,
Hence, 
$H^2(\mathbb{D}) / (\varphi H^2(\mathbb{D}) + \psi H^2(\mathbb{D}))$ is finite-dimensional, which implies the existence of a finite Blaschke product $B$ (whose zeros correspond to the common zeros of $\varphi$ and $\psi$) such that $B H^2(\mathbb{D}) = \varphi H^2(\mathbb{D}) + \psi H^2(\mathbb{D})$. Since on the unit disk the \(H^2\)-corona theorem is equivalent to the \(H^\infty\)-corona theorem, it follows that, there are \(u,v\in H^\infty\) such that
$$ u\varphi + v\psi = B.$$
We restrict this identity to an arbitrary support set $S \subset M(L^\infty)$ to get
\begin{equation}\label{eq_1}
    u|_S\varphi|_S + v|_S\psi|_S = B|_S.
\end{equation}
Since $S$ is contained in the \v{S}ilov boundary, where inner functions are unimodular. Multiplying the identity \eqref{eq_1} by $\overline{\varphi}|_S$, we obtain
$$ \overline{\varphi}B|_S = u|_S(\varphi\overline{\varphi})|_S + v|_S(\overline{\varphi}\psi|_S) = u|_S + v|_S(\overline{\varphi}\psi|_S). $$
If $\overline{\varphi}\psi|_S \in H^\infty|_S$, since $u|_S, v|_S \in H^\infty|_S$, the algebraic closeness of $H^\infty|_S$ guarantees that $\overline{\varphi}B|_S \in H^\infty|_S$. Notice that $\overline{B}|_S \in H^\infty|_S$, since $B$ is a finite Blaschke product. Multiplying \(\overline{\varphi}B|_S\) by $\overline{B}|_S$, we have
$$ \overline{B}|_S (\overline{\varphi}B|_S) = \overline{\varphi}|_S (\overline{B}B)|_S = \overline{\varphi}|_S \in H^\infty|_S. $$

By means of symmetric reasoning, if $\varphi\overline{\psi}|_S \in H^\infty|_S$, it follows that $\overline{\psi}|_S \in H^\infty|_S$.

Therefore, either $\overline{\varphi}|_S \in H^\infty|_S$ or $\overline{\psi}|_S \in H^\infty|_S$ holds for any support set $S$. By condition (3) in Theorem \ref{thm_ACS} , this forces the product $P_{\mathcal{Q}_\varphi} P_{\mathcal{Q}_\psi}$ to be compact.
\end{proof}

%When considering the condition \((WC)\) for two interpolating Blaschke products, we can provide more details on the separation condition.

For interpolated Blaschke products, we can characterize the (WC) condition by the separability of their zeros under the pseudo-hyperbolic metric.

\begin{proposition} 
	Let $B_1$ and $B_2$ be two interpolating Blaschke products with zero sequences $Z_1 = \{a_n\}$ and $Z_2 = \{b_n\}$, respectively. Then $B_1$ and $B_2$ satisfy condition $(WC)$
	if and only if there exist $r \in (0, 1)$ and $\delta > 0$ such that
    \begin{equation}\label{sc}
        \inf \left\{ \rho(\alpha, \beta): \alpha\in Z_1, \beta\in Z_2, ,|\alpha|,|\beta|> r \right\} \ge \delta > 0,
    \end{equation}
	where $\rho(z, w)$ is the pseudo-hyperbolic distance on \(\mathbb{D}\).
\end{proposition}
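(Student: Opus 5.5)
Both directions rest on two standard estimates for an interpolating Blaschke product $B$ with zero sequence $Z$. First, Schwarz--Pick gives the upper bound $|B(z)|\le \rho(z,a)$ for every $a\in Z$. Second, a lower bound holds away from the zeros: for every $\eta>0$ there is $c(\eta)>0$ such that $\rho(z,Z)\ge\eta$ implies $|B(z)|\ge c(\eta)$. This is the classical fact that an interpolating Blaschke product is bounded below off pseudo-hyperbolic neighbourhoods of its zeros (Garnett, Chapter X). Concretely, when $\rho(z,Z)\ge\eta$ the sequence $Z\cup\{z\}$ is still uniformly separated, and the Carleson measure constant is increased by at most a bounded amount; hence the $\delta$ of Carleson's interpolation theorem for the enlarged sequence is bounded below by some $\delta'(\eta)>0$. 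Evaluating the product over the enlarged sequence at the point $z$ then gives $|B(z)|=\prod_n\rho(z,a_n)\ge\delta'(\eta)$.

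\emph{(WC) implies the separation condition \eqref{sc}.} Argue by contradiction. Suppose \eqref{sc} fails for every $r$ and $\delta$. Then there are $\alpha_k\in Z_1$ and $\beta_k\in Z_2$ with $|\alpha_k|,|\beta_k|\to1$ and $\rho(\alpha_k,\beta_k)\to0$. At $z=\alpha_k$ we have $B_1(\alpha_k)=0$ and, by Schwarz--Pick, $|B_2(\alpha_k)|\le\rho(\alpha_k,\beta_k)\to0$. Hence $|B_1(\alpha_k)|+|B_2(\alpha_k)|\to0$ along a sequence tending to the boundary, which contradicts (WC).

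\emph{The separation condition \eqref{sc} implies (WC).} Fix $r$ and $\delta$ as in \eqref{sc}, and set $\eta=\delta/3$. Choose $r'<1$ so close to $1$ that $|z|>r'$ and $\rho(z,w)<\eta$ force $|w|>r$; this is possible because pseudo-hyperbolic discs of fixed radius shrink to the boundary uniformly. Now take $|z|>r'$. If $\rho(z,Z_1)\ge\eta$, then $|B_1(z)|\ge c_1(\eta)$. Otherwise there is $\alpha\in Z_1$ with $\rho(z,\alpha)<\eta$, and then $|\alpha|>r$. For any $\beta\in Z_2$ with $|\beta|>r$, the triangle inequality for $\rho$ gives $\rho(z,\beta)\ge\delta-\eta=2\eta$. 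The zeros $\beta$ with $|\beta|\le r$ are finitely many and lie in a compact set, so after enlarging $r'$ they satisfy $\rho(z,\beta)\ge\eta$ as well. Hence $\rho(z,Z_2)\ge\eta$ and $|B_2(z)|\ge c_2(\eta)$. In both cases $|B_1(z)|+|B_2(z)|\ge\min\{c_1(\eta),c_2(\eta)\}>0$ for $|z|>r'$, which is (WC).

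The main obstacle is the lower-bound lemma. If one prefers not to quote it, the cleanest self-contained route is the enlargement argument above: show that the Carleson norm of $\sum_n(1-|a_n|^2)\delta_{a_n}+(1-|z|^2)\delta_z$ is at most the original norm plus a constant, while the separation constant is at least $\min\{\text{old separation},\eta\}$. The quantitative equivalence in Carleson's theorem then supplies $\delta'(\eta)$.
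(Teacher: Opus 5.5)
Your argument is correct, but the converse direction follows a different route from the paper's. The direction from (WC) to the separation condition is the paper's Schwarz--Pick argument: the paper runs it directly, getting $\rho(\alpha,\beta)\ge|B_2(\alpha)|\ge\delta_0$ for $\alpha\in Z_1$ with $|\alpha|>r$, while you run it by contradiction.

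For the converse, the paper argues globally through the union of the zero sets:
\begin{itemize}
\item Cross-separation beyond $r$, together with the separation of each $Z_i$, makes $Z_1^r\cup Z_2^r$ uniformly separated.
\item The sum of the two Carleson measures is Carleson, so the union is an interpolating sequence.
\item The paper then uses, without spelling it out, that two Blaschke products whose combined zero set is interpolating satisfy $\inf_{\mathbb{D}}(|B_1'|+|B_2'|)>0$. One interpolates $0$ on $Z_1^r$ and $1$ on $Z_2^r$ to get a Bezout identity.
\item Finally it divides out the finitely many zeros in $\{|z|\le r\}$. These form finite Blaschke products, which are bounded below near $\mathbb{T}$.
\end{itemize}

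You never form the union. You apply the lower bound $|B_i(z)|\ge c_i(\eta)$ off the $\eta$-neighbourhood of $Z_i$ to each $B_i$ separately. The triangle inequality for $\rho$ then shows that a point near $\mathbb{T}$ cannot be $\eta$-close to both zero sets.

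What your route buys: it is more local and rests on a single standard lemma. Your justification of that lemma is correct: adding $(1-|z|^2)\delta_z$ raises the Carleson constant by at most $2$, so the quantitative Carleson estimate gives $\prod_n\rho(z,a_n)\ge\delta'(\eta)$. It also treats the zeros in $\{|z|\le r\}$ directly. Your second enlargement of $r'$ is unnecessary, since your first choice of $r'$ already forces $\rho(z,\beta)\ge\eta$ whenever $|\beta|\le r$.

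What the paper's route buys: a stronger intermediate conclusion, namely the full corona condition (C) and a Bezout identity for the tail products. This ties in with its use of $\varphi H^2+\psi H^2$ in the neighbouring proposition. The cost is an unstated interpolation step.
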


\begin{proof}
    $(\Rightarrow)$ Assume $B_1$ and $B_2$ satisfy condition $(WC)$. By definition, there exist $r \in (0, 1)$ and $\delta_0 > 0$ such that for all $z$ in the annulus $r < |z| < 1$, 
	$$ |B_1(z)| + |B_2(z)| \ge \delta_0. $$
	Take any $\alpha \in Z_1$ with $|\alpha| > r$. Since $\alpha$ is a zero of $B_1$, then
	$$ |B_2(\alpha)|=|B_2(\alpha)-B_1(\alpha))| \ge \delta_0. $$
	By Schwarz-Pick lemma, for any $\beta \in Z_2$, we have
	$$
    \rho(\alpha,\beta) \geq |B_2(\alpha)| \geq \delta_0. 
    $$
    $(\Leftarrow)$ Conversely, assume there exists $r \in (0,1)$ such that points in the sequences $Z_1^{r} = \{\alpha\in Z_1: |\alpha| > r\}$ and $Z_2^{r} = \{\beta\in Z_2: |\beta| > r\}$ are satisfy condition \eqref{sc}. Then there exists a constant \(\delta_0>0\) such that for any $z,w\in Z_1^{r} \cup Z_2^{r}$, \(\inf\rho(z,w)\geq\delta_0>0.\) Let \(\mu_1\) and \(\mu_2\) be the corresponding Carleson measure induced by \(Z_1^r\) and \(Z_2^r\) and \(\mu\) be the discrete measure induced by \(Z_1^{r} \cup Z_2^{r}\). Then \(\mu=\mu_1+\mu_2\), which is also a Carleson measure. Hence, $Z_1^r \cup Z_2^r$ is an interpolating sequence. Therefore, the corresponding interpolating Blaschke products \(B_1'\) and \(B_2'\)
    satisfy
	$$ \inf_{z \in \mathbb{D}} \left( |B_1'(z)| + |B_2'(z)| \right) \geq \eta > 0 $$
    for some constant \(\eta>0\).
	
	Since \(B_1/B_1'\) and \(B_2/B_2'\) are finite Blaschke products, then their zero points are contained in \(\{|z|\leq r\}\). This confirms that $B_1$ and $B_2$ satisfy condition $(WC)$, completing the proof.
\end{proof}

Finally, we present a class of pairs of interpolating Blaschke products. Their zero set fails the condition \eqref{sc}. For these pairs, the commutators of the corresponding inner projections are compact. The Example \ref{exm_1} can be regarded as a direct consequence of this result.

For $m_1, m_2$ in the maximal ideal space $M(H^\infty)$, the pseudo-hyperbolic distance is defined as
\[
    \rho(m_1, m_2) = \sup \{ |\hat{f}(m_2)| : f \in H^\infty, \|f\|_\infty \le 1, \hat{f}(m_1) = 0 \}.
\]
The \textit{Gleason part} containing $m$, denoted by $P(m)$, is the equivalence class of $m$ under the relation $\rho < 1$, i.e., $P(m) = \{ x \in M(H^\infty) : \rho(x, m) < 1 \}$. If $P(m)$ is a non-trivial Gleason part (i.e., containing more than one point), Hoffman's theorem states that $P(m)$ has the structure of an analytic disc \cite{Garnett1981}*{Chapter 10}. Specifically, there exists a bijective analytic map $L_m : \mathbb{D} \to P(m)$ with $L_m(0) = m$. This map is obtained as a pointwise limit of Mobius transformations $L_{\alpha_i}(z) = \frac{z + \alpha_i}{1 + \bar{\alpha}_i z}$, where $\{\alpha_i\}$ (e.g. an interpolating sequence) is a net in $\mathbb{D}$ converging to $m$ in the Gelfand topology.

\begin{proposition}
Let $B_1$ and $B_2$ be two interpolating Blaschke products with zero sequences $\{\alpha_n\}_{n=1}^\infty$ and $\{\beta_n\}_{n=1}^\infty$, respectively. If
    \[
        \lim_{n \to \infty} \rho(\alpha_n, \beta_n) = 0,
    \]
    then the commutator $[P_{B_1}, P_{B_2}]$ is compact.
\end{proposition}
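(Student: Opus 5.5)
Via condition (3) of Theorem \ref{thm_A}, we will show that for every support set $S$ we have $\overline{B_1}B_2|_S\in H^\infty|_S$, i.e.\ the stronger conclusion $\overline{B_1}B_2\in H^\infty+C$ (condition (2) then holds trivially). Recall the Chang--Marshall description: $\overline{B_1}B_2\in H^\infty+C$ if and only if $\overline{B_1}|_S\in H^\infty|_S[B_2]$ locally. Concretely, it suffices to show that at every $m\in M(H^\infty+C)$ with $\hat B_1(m)=0$ we also have $\hat B_2(m)=0$ with the same local multiplicity structure. Equivalently, we show the zeros of $B_1$ in $M(H^\infty)$ and those of $B_2$ coincide: this is the Guillory-type criterion quoted after Example~\ref{exm_1}. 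For interpolating Blaschke products, $\overline{B_1}B_2\in H^\infty+C$ holds precisely when $Z(B_1)\cap(M(H^\infty)\setminus\D)\subseteq Z(B_2)$, where $Z(\cdot)$ denotes the zero set in $M(H^\infty)$.

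\textbf{Step 1: boundary zeros of $B_1$.} Since $\{\alpha_n\}$ is interpolating, every zero $m$ of $\hat B_1$ in $M(H^\infty)\setminus\D$ lies in the closure of $\{\alpha_n\}$. So $m$ is a limit of a subnet $\alpha_{n_i}$, and $P(m)$ is a nontrivial Gleason part with Hoffman map $L_m=\lim L_{\alpha_{n_i}}$.

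\textbf{Step 2: transfer to $B_2$.} Write $\beta_{n}=L_{\alpha_{n}}(z_n)$ with $|z_n|=\rho(\alpha_n,\beta_n)\to0$. Then $\hat B_2(m)=\lim_i B_2(\alpha_{n_i})$, and Schwarz--Pick gives $|B_2(\alpha_n)|\le\rho(\alpha_n,\beta_n)\to0$. Hence $\hat B_2(m)=0$. Symmetrically, every boundary zero of $B_2$ is a zero of $B_1$, so the two functions have the same zero set off $\D$.

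\textbf{Step 3: matching multiplicities.} Zeros of interpolating Blaschke products in $M(H^\infty)$ are simple in the Hoffman sense: $B_1\circ L_m$ has a simple zero at $0$, with $(B_1\circ L_m)'(0)\neq0$ thanks to the bound $(1-|\alpha_n|^2)|B_1'(\alpha_n)|\ge\delta$. Since $B_2$ is also interpolating, $\hat B_2\circ L_m$ vanishes simply at $0$ as well. We then invoke the Guillory (or Axler--Gorkin) theorem to get $\overline{B_1}B_2\in H^\infty+C$.

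If one wants to avoid that citation, the direct route is to show that for each support set $S$, contained in the fiber over a point $m$ whose Gleason part is nontrivial, $B_2/B_1$ restricted to $P(m)$ extends to a bounded analytic function. Indeed, $B_2\circ L_m/B_1\circ L_m$ has a removable singularity because both zeros are simple and at the same point. One would then transfer this to $S$ via Hoffman's theory, which says $H^\infty|_S$ is determined on the part. Support sets over trivial Gleason parts are handled by the fact that $B_1$ is then unimodular-invertible there (it does not vanish near the fiber), so $\overline{B_1}|_S\in H^\infty|_S$.

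I expect Step 3, i.e.\ turning the pointwise zero coincidence into membership in $H^\infty|_S$, to be the main obstacle. I will handle it by citing Guillory's characterization rather than proving it from scratch.
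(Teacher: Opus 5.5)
Your argument is correct, but it is organized differently from the paper's. The paper asserts, without reference, that $\overline{B_1}B_2\in H^\infty+C$ is equivalent to $B_2/B_1$ being analytic on every Gleason part in $M(H^\infty)\setminus\D$. It then splits into two cases. For parts missing $\overline{Z_1}$, $\hat B_1$ is bounded below by Hoffman's theorem. For parts meeting $\overline{Z_1}$, it composes with the Hoffman map $L_m$ and checks that $(\hat B_2\circ L_m)/(\hat B_1\circ L_m)$ has only removable singularities, because both functions have simple, separated zeros at the same points.

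You instead let the Guillory--Sarason division theorem carry all the local analysis: for interpolating $b$ and $f\in H^\infty+C$, $\overline{b}f\in H^\infty+C$ if and only if $\hat f$ vanishes on $Z(b)\cap M(H^\infty+C)$. The proof then reduces to your Steps 1--2:
\begin{itemize}
\item Hoffman's description $Z(\hat B_1)\setminus\D\subseteq\overline{\{\alpha_n\}}$;
\item the Schwarz--Pick bound $|B_2(\alpha_n)|\le\rho(\alpha_n,\beta_n)$, taken along a net $\alpha_{n_i}\to m$. Here $n_i\to\infty$ automatically, since $m\notin\D$.
\end{itemize}

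This buys you two things. First, Step 3 and the reverse inclusion $Z(B_2)\subseteq Z(B_1)$ are unnecessary, because simplicity of the zeros of an interpolating product is already built into the division theorem. Only $B_1$ needs to be interpolating, so your argument shows that $[P_{B_1},P_\psi]$ is compact for \emph{any} inner $\psi$ vanishing on $\{\beta_n\}$. Second, your net formulation is the correct one. The paper's ``subsequences'' with $\alpha_{n_k}\to m$ cannot exist as stated: no subsequence of an interpolating sequence converges in $M(H^\infty)$, since one can interpolate the values $(-1)^k$ on it. The paper's route gives a more concrete picture on each analytic disc, but its key equivalence is essentially the same division theorem in disguise.

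Two side remarks in your write-up should be dropped.
\begin{itemize}
\item The opening ``Chang--Marshall description'' is not a statement of that theorem. Chang--Marshall concerns generation of Douglas algebras by $H^\infty$ and conjugates of inner functions.
\item Your fallback ``direct route'' is not a proof. If $S$ is the support of the representing measure of $m$, then $\overline{B_1}|_S\in H^\infty|_S$ holds if and only if $|\hat B_1(m)|=1$. That does not follow from $\hat B_1$ being nonzero near $m$. Moreover, passing from analyticity on $P(m)$ to membership in $H^\infty|_S$ is exactly the nontrivial content of the division theorem.
\end{itemize}
Resting the proof on the citation, as you intend, is the right call.
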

\begin{proof}
    To prove the commutator $[P_{B_1}, P_{B_2}]$ is compact, by Theorem \ref{thm_A}, it is enough to prove $\bar{B}_1 B_2\in H^\infty + C$.This is equivalent to proving that the $B_2 / B_1$ is analytic on every Gleason part $P$ contained in the corona $M(H^\infty) \setminus \mathbb{D}$. Let $Z_1$ and $Z_2$ denote the zero sequences $\{\alpha_n\}$ and $\{\beta_n\}$, respectively.

    Let $P$ be an arbitrary Gleason part in $M(H^\infty) \setminus \mathbb{D}$. We consider two cases based on the intersection of $P$ with the closure of the zero sequence $\overline{Z_1}$ in the Gelfand topology.

    \textbf{Case 1:} $P \cap \overline{Z_1} = \emptyset$. 
    By Hoffman's result in \cite{Hoffman1967}, since the part $P$ avoids the closure of the interpolating sequence $Z_1$, the Gelfand transform $\hat{B}_1$ is bounded away from zero on $P$. Then there exists $\eta > 0$ such that $|\hat{B}_1(x)| \ge \eta > 0$ for all $x \in P$. Consequently, $1/\hat{B}_1$ is bounded and analytic on $P$. Since $\hat{B}_2$ is analytic on $P$, the ratio $B_2/B_1$ is analytic on $P$.

    \textbf{Case 2:} $P \cap \overline{Z_1} \neq \emptyset$. 
    In this case, $P$ is always a non-trivial part (an analytic disc). Let $m \in P \cap \overline{Z_1}$, which implies $\hat{B}_1(m) = 0$. Since $\lim_{n \to \infty} \rho(\alpha_n, \beta_n) = 0$, then there are two subsequences \(\{\alpha_{n_k}\}\) and \(\{b_{n_k}\}\) such that \(\alpha_{n_k}\to m\) and \(b_{n_k}\to m\) in \(M(H^\infty+C)\). This implies that $\overline{Z_1} \setminus \mathbb{D} = \overline{Z_2} \setminus \mathbb{D}$. Therefore, we have $\hat{B}_2(m) = 0$.

    Let $L_m : \mathbb{D} \to P$ be the analytic map with $L_m(0) = m$. The compositions $F_1 = \hat{B}_1 \circ L_m$ and $F_2 = \hat{B}_2 \circ L_m$ are interpolating Blaschke products on $\mathbb{D}$. Since $Z_1$ and $Z_2$ are interpolating sequences, there exist separation constants $\delta_1, \delta_2 > 0$ such that
    \[
        \prod_{j \neq i} \rho(\alpha_i, \alpha_j) \ge \delta_1 \quad \text{and} \quad \prod_{j \neq i} \rho(\beta_i, \beta_j) \ge \delta_2.
    \]
    Let $\delta = \min\{\delta_1, \delta_2\}$. Since pseudo-hyperbolic distances are preserved under $L_m$, then any two distinct zeros of $F_1$ (or $F_2$) are separated by a pseudo-hyperbolic distance of at least $\delta$. 

    Since $F_1(0) = F_2(0) = 0$, it follows that in the open disk $\mathbb{D}_\delta = \{z \in \mathbb{D} : |z| < \delta\}$, both $F_1$ and $F_2$ have exactly one simple zero located at the origin. Thus, the singularity at $z=0$ for the ratio $F_2(z)/F_1(z)$ is removable, making $F_2/F_1$ analytic on $\mathbb{D}_\delta$. 

    If $F_1$ has another zero $w \in \mathbb{D}$ (with $|w| \ge \delta$), we can select the maximal ideal $m' = L_m(w) \in P$ as the new base point. Replacing $L_m$ with the corresponding map $L_{m'}$ applies the same argument, then the \((B_2\circ L_{m'})/(B_1\circ L_{m'})\) is analytic in a $\delta$-neighborhood of $w$. Hence,
    %$F_2/F_1$ is globally analytic on $\mathbb{D}$, which confirms that
    $B_2/B_1$ is analytic on the entire Gleason part $P$.

    Thus, we conclude that $\bar{B}_1 B_2 \in H^\infty + C$.
    % \[
    % H[\bar{B}_1 B_2]\cap H[\bar{B}_2 B_1]\subseteq H^\infty + C.
    % \]
    By Theorem \ref{thm_A}, the proof is complete.
\end{proof}
\subsection{Remark}
As illustrated by the above discussion, the compactness of the commutator $[P_{\mathcal{Q}_{\varphi_1}}, P_{\mathcal{Q}_{\varphi_2}}]$ is closely related to the condition $\varphi_1/\varphi_2 \in H^\infty+C$, although the two are not strictly equivalent. Characterizing this divisibility purely in terms of the boundary behavior of the zeros is not easy. While Guillory and Sarason  \cites{Guillory1981, Guillory1984} investigated specific divisibility properties, and Izuchi \cite{Izuchi2002} later extended this to singular inner functions, the question of when an arbitrary function in $H^\infty+C$ admits a non-trivial inner divisor remains an open problem. Returning to the commutator itself, we pose the following question:
\begin{center}
Can the compactness of $[P_{\mathcal{Q}_{\varphi}}, P_{\mathcal{Q}_{\psi}}]$ be characterized entirely by the boundary behavior of the inner symbols $\varphi$ and $\psi$?
\end{center}

\section{\texorpdfstring{Products of inner projections on \(H^2(\mathbb{D}^n)\)}{Products}}
\subsection{Preliminaries}
In this section, we address the question of characterizing the compactness of the product of two inner projections $P_{\mathcal{Q}_\varphi} P_{\mathcal{Q}_\psi}$ on \(H^2(\mathbb{D}^n)\). It is well known that on the Hardy space $H^2(\mathbb{D})$, the compactness of the product $P_{\mathcal{Q}_\varphi} P_{\mathcal{Q}_\psi}$ is equivalent to the compactness of the commutator of corresponding Toeplitz operators $[T^{*}_{\varphi}, T_{\psi}]$ (see \cite{ACS1978}). However, it does not hold in the multivariate setting. For instance, let \(\varphi\) and \(\psi\) be infinite Blaschke products which only depends on \(z_1\) and \(z_2\) respectively, then $P_{\mathcal{Q}_{\varphi}} P_{\mathcal{Q}_\psi}$ is no longer compact, whereas $\left[T^*_{\varphi}, T_\psi\right]=0$. In fact, the compactness of $\left[T^*_{\varphi}, T_\psi\right]$ is equivalent to the inner function symbols being separated in variables. 

\begin{theorem}[\cite{Ding2003} Theorem 2.2] \label{ding}
	Let $f$ and $g$ be bounded pluriharmonic functions on $\mathbb{D}^n$ ($n > 1$). The following conditions are equivalent:
	\begin{enumerate}
		\item[(1)] $T_f T_g - T_g T_f = 0$;
		\item[(2)] $T_f T_g - T_g T_f$ is compact;
		\item[(3)] For each $j$, either $f$ and $g$ are analytic with respect to $z_j$, or $f$ and $g$ are conjugate analytic with respect to $z_j$, or there exists a constant $C_j$ such that $g - C_j f$ is constant with respect to $z_j$.
	\end{enumerate}
\end{theorem}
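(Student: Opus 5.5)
The statement immediately above is Theorem \ref{ding}, cited from \cite{Ding2003}. The paper invokes it rather than proving it, so I only sketch how I would establish it. The implication (1) $\Rightarrow$ (2) is trivial. For (3) $\Rightarrow$ (1), I would write each bounded pluriharmonic function as $f=f_1+\overline{f_2}$ with $f_1,f_2\in H^2(\mathbb{D}^n)$. I would then use the tensor-product structure $H^2(\mathbb{D}^n)=H^2(\mathbb{D})\otimes\cdots\otimes H^2(\mathbb{D})$. The commutator $[T_f,T_g]$ splits into contributions from semicommutators of the form $T_{\overline{u}}T_v-T_vT_{\overline{u}}$ with $u,v$ analytic. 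Variable by variable, such a term vanishes when the analytic and coanalytic parts "live" in different coordinates, or when both symbols are analytic (or both coanalytic) in $z_j$. In the third alternative of (3), $g-C_jf$ is independent of $z_j$, so $[T_f,T_g]=[T_f,T_{g-C_jf}]$ reduces the problem to symbols in which $z_j$ is separated.

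The main content is (2) $\Rightarrow$ (3). I would test compactness against the weakly null sequences obtained by letting one variable go to the boundary while freezing the others. Concretely, I would apply $[T_f,T_g]$ to normalized reproducing kernels $k_\lambda$ with $\lambda_j\to\zeta\in\mathbb{T}$, or to monomials $z_j^N$ times a fixed function, with $N\to\infty$. Compactness forces $\|[T_f,T_g]k_\lambda\|\to0$. A Berezin-transform type computation then turns this into a vanishing identity on the torus in the remaining variables. I would expect that identity to say that the "Hankel part" in $z_j$ of $f$ and of $g$ is rank-one proportional, namely
\[
\partial_{z_j}f_1\,\overline{\partial_{z_j}g_2}=\partial_{z_j}g_1\,\overline{\partial_{z_j}f_2}.
\]
Because the functions are holomorphic/antiholomorphic in different factors, this identity separates variables.

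The hardest step is the last: converting that identity into the trichotomy of (3). I would argue first that if some $\partial_{z_j}f_2\not\equiv0$ and $\partial_{z_j}f_1\not\equiv0$ (the mixed case), the identity forces $\partial_{z_j}g_1=C_j\partial_{z_j}f_1$ and $\partial_{z_j}g_2=\overline{C_j}\partial_{z_j}f_2$. The argument uses that a product of a holomorphic and an antiholomorphic function is constant in the relevant sense only when both factors are constant. Otherwise the surviving pieces give pure analyticity or coanalyticity in $z_j$. The essential use of $n>1$ is that compactness, rather than vanishing, suffices: a compact perturbation cannot absorb the infinite-dimensional tensor factor coming from the other variables. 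I would make this precise by observing that the one-variable defect $T_{\overline u}T_v-T_vT_{\overline u}$ gets tensored with a nonzero operator on $H^2(\mathbb{D}^{n-1})$, and a nonzero operator tensored this way is compact only if the tensored factor is zero.
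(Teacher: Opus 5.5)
The paper only cites \cite{Ding2003} here, so your sketch has to stand on its own. Its main step cannot be completed, because as stated here (2)$\Rightarrow$(1) and (2)$\Rightarrow$(3) are false. Take $n=2$, $f=z_1+z_2+\bar z_1\bar z_2$ and $g=\bar f$. With $S_i=T_{z_i}$ and $E_i=I-S_iS_i^*$, we have $T_f=S_1+S_2+S_1^*S_2^*$ and $T_g=T_f^*$. Since $S_1,S_2$ doubly commute, all cross terms drop out, and
\[
[T_f,T_g]=[S_1,S_1^*]+[S_2,S_2^*]+[S_1^*S_2^*,S_1S_2]=-E_1-E_2+(E_1+E_2-E_1E_2)=-E_1E_2 .
\]
This is minus the rank-one projection onto the constants. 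So $[T_f,T_g]$ is compact and nonzero, while (3) fails at $j=1$. Neither $f$ nor $g$ is analytic or coanalytic in $z_1$, and $\partial_1(af+bg)=a+bz_2$, $\bar\partial_1(af+bg)=a\bar z_2+b$ vanish identically only when $a=b=0$. (Likewise, if $\varphi,\psi$ are finite Blaschke products and $f=\varphi(z_1)+\psi(z_2)+\overline{\varphi(z_1)\psi(z_2)}$, then $[T_f,T_{\bar f}]=-P_{\mathcal{Q}_\varphi}P_{\mathcal{Q}_\psi}$, a nonzero finite-rank commutator.)

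\textbf{Where your argument breaks.} The failure is at your identity. In the example, $f_1=g_2=z_1+z_2$ and $f_2=g_1=z_1z_2$, so $\partial_1f_1\overline{\partial_1g_2}-\partial_1g_1\overline{\partial_1f_2}=1-|z_2|^2$, which vanishes only on $\mathbb{D}\times\mathbb{T}$.

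\emph{General picture.} In general $\widetilde{[T_f,T_g]}=F-P[F]$, where $F=f_1\bar g_2-g_1\bar f_2$ and $P[F]$ is its $n$-harmonic Poisson extension. Hence $\partial_j\bar\partial_j\widetilde{[T_f,T_g]}$ is exactly your expression. Your identity on all of $\mathbb{D}^n$, for every $j$, is therefore equivalent to $[T_f,T_g]=0$. The boundary decay of the Berezin transform of a compact operator gives it only where some other coordinate is unimodular. There the proportionality ``constant'' may depend on the boundary point: here $g(\cdot,\zeta)-\zeta f(\cdot,\zeta)$ is constant for each $\zeta\in\mathbb{T}$.

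\emph{Tensor heuristic and test vectors.} Your tensor heuristic fails for the same reason: the non-compact pieces $-E_1,-E_2$ are cancelled by the contribution of the mixed term $\bar z_1\bar z_2$. Also, your test $\lambda_j\to\zeta$ with the other coordinates frozen actually probes the directions other than $j$.

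\emph{What can be salvaged.} First, (1)$\Leftrightarrow$(3) is true, and follows from the Berezin formula plus your meromorphic/antimeromorphic argument, once the third alternative is made symmetric: ``some nontrivial $a_jf+b_jg$ is independent of $z_j$''. As written, $f=z_1+\bar z_1$ and $g=z_2+\bar z_2$ commute yet violate (3) at $j=2$. This is the case $\partial_jf_1\equiv\partial_jf_2\equiv0$, which your ``otherwise'' branch mishandles.

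Second, compactness does suffice in the only case the paper uses, $f=\varphi$ analytic and $g=\bar\psi$ coanalytic. The slice information gives $\partial_j\varphi\,\overline{\partial_j\psi}=0$ on $\mathbb{D}\times\{\zeta'\}$ for a.e.\ $\zeta'\in\mathbb{T}^{n-1}$. So one of $\partial_j\varphi,\partial_j\psi$ vanishes on these slices for a set of $\zeta'$ of positive measure. Uniqueness of boundary values then forces it to vanish identically.
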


Although the equivalence between the compact product $P_{\mathcal{Q}\varphi} P_{\mathcal{Q}_\psi}$ and the compact commutator $[T^{*}_{\varphi}, T_{\psi}]$ fails on the Hardy space $H^2(\mathbb{D}^n)$ ($n>1$), we find that the compactness of the defect operators of $T_{\varphi}$ and $T_{\psi}$ is equivalent to that of the product $P_{\mathcal{Q}_\varphi} P_{\mathcal{Q}_\psi}$.

The defect operator for canonical isometric pair $T=(T_1,T_2)$ on some Hilbert space $\mathcal{H}$ is defined by
\[
\Delta_T:=I-T_1T_1^*-T_2T_2^*+T_1T_1^*T_2T_2^*.
\]
The literature already contains a variety of results concerning defect operators of analytic Hardy submodules. As is shown in \cite{GuoYang} \cite{Guo2004} \cite{Yang2005} \cite{YangRW}, this operator is crucially related to operator theory and the structure of submodules.

In what follows, we require information about the defect operators and the relative position relationship for two closed subspaces. We collect the needed information here, which is useful in our proof.

\begin{lemma}[\cite{GuoWang2007} Proposition 1.1]\label{lem_1}
	For any isometric pair $T=\left(T_1, T_2\right)$ on $\mathcal{H}$,\par
    \begin{enumerate}
        \item $\Delta_T$ is finite rank if and only if both $\left[T_1^*, T_1\right]\left[T_2^*, T_2\right]$ and $\left[T_1^*, T_2\right]$ are finite rank; 
        \item $\Delta_T$ is compact if and only if $\left[T_1^*, T_1\right]\left[T_2^*, T_2\right]$ and $\left[T_1^*, T_2\right]$ are both compact.
    \end{enumerate}

\end{lemma}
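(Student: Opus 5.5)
The plan is to express $\Delta_T$ and the two operators $[T_1^*,T_1][T_2^*,T_2]$ and $[T_1^*,T_2]$ in terms of one another. Every relation will involve only multiplication by bounded operators and addition. Both the finite-rank operators and the compact operators form two-sided ideals in $B(\mathcal{H})$, so parts (1) and (2) then follow from the same algebra. Throughout I take $T=(T_1,T_2)$ to be a commuting pair of isometries, so $T_1^*T_1=T_2^*T_2=I$ and $T_1T_2=T_2T_1$. I use the defect operator in the form
\[
\Delta_T=I-T_1T_1^*-T_2T_2^*+T_1T_2T_2^*T_1^*,
\]
where the last term is $(T_1T_2)(T_1T_2)^*$.

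\textbf{Step 1 (expressing $\Delta_T$ via the product).} Put $P_i:=I-T_iT_i^*$. Since $T_i^*T_i=I$, we have $P_i=[T_i^*,T_i]$. Expanding gives
\[
P_1P_2=I-T_1T_1^*-T_2T_2^*+T_1T_1^*T_2T_2^*,
\]
and therefore $\Delta_T-P_1P_2=T_1\bigl(T_2T_2^*T_1^*-T_1^*T_2T_2^*\bigr)$. Commutativity gives $T_2^*T_1^*=(T_1T_2)^*=(T_2T_1)^*=T_1^*T_2^*$, so $T_2T_2^*T_1^*=T_2T_1^*T_2^*$. Hence
\[
\Delta_T=[T_1^*,T_1][T_2^*,T_2]-T_1[T_1^*,T_2]T_2^*.
\]
This identity immediately gives the ``if'' directions of (1) and (2).

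\textbf{Step 2 (recovering the cross-commutator from $\Delta_T$).} For the converse I compress $\Delta_T$ by $T_1^*$ on the left and $T_2$ on the right, computing term by term:
\begin{itemize}
\item $T_1^*T_2$ comes from $I$;
\item $T_1^*(T_1T_1^*)T_2=T_1^*T_2$;
\item $T_1^*(T_2T_2^*)T_2=T_1^*T_2$;
\item $T_1^*T_1T_2T_2^*T_1^*T_2=T_2T_2^*T_1^*T_2=T_2T_1^*T_2^*T_2=T_2T_1^*$.
\end{itemize}
Combining these,
\[
T_1^*\Delta_T T_2=T_1^*T_2-T_1^*T_2-T_1^*T_2+T_2T_1^*=-[T_1^*,T_2].
\]
So if $\Delta_T$ is finite rank (respectively compact), then so is $[T_1^*,T_2]$. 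The identity of Step 1 then shows that $[T_1^*,T_1][T_2^*,T_2]=\Delta_T+T_1[T_1^*,T_2]T_2^*$ is finite rank (respectively compact) as well.

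The only place I expect any difficulty is Step 2: choosing the compression $T_1^*\,\cdot\,T_2$ that isolates $[T_1^*,T_2]$. The rest is bookkeeping, relying on $T_1T_2=T_2T_1$ (equivalently $T_2^*T_1^*=T_1^*T_2^*$) and on the ideal property of finite-rank and compact operators.
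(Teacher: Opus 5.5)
Your proof is correct and complete. Both identities
\[
\Delta_T=[T_1^*,T_1][T_2^*,T_2]-T_1[T_1^*,T_2]T_2^*,\qquad T_1^*\Delta_T T_2=-[T_1^*,T_2]
\]
check out term by term, using only $T_i^*T_i=I$ and $T_2^*T_1^*=T_1^*T_2^*$. Combined with the ideal property, they give both directions of (1) and (2). The paper gives no argument of its own here; it only cites Guo--Wang. So your computation is a self-contained proof rather than a competing route.

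One point deserves explicit mention. The $\Delta_T$ you use has last term $T_1T_2T_2^*T_1^*=(T_1T_2)(T_1T_2)^*$. The paper's printed definition instead has last term $T_1T_1^*T_2T_2^*$.

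\begin{itemize}
\item The printed operator is literally $[T_1^*,T_1][T_2^*,T_2]$, and your compression $T_1^*\,\cdot\,T_2$ annihilates it.
\item For that operator, the claim ``$\Delta_T$ compact $\Rightarrow$ $[T_1^*,T_2]$ compact'' is false for general commuting isometries. Take $T_1=M_{U(\Pi^\perp+z\Pi)}$ and $T_2=M_{(\Pi+z\Pi^\perp)U^*}$ on $H^2(\mathbb{D})\otimes E$, with $U^*$ an infinite-multiplicity bilateral shift on $E$ and $\Pi$ the projection onto its negative-index part. Then $P_1P_2=0$, but $[T_1^*,T_2]$ has infinite rank.
\end{itemize}

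Your form is the right one. It is Guo--Wang's defect operator, and it is exactly what the paper uses in the proof of Proposition~\ref{lem:2}, where $-\Delta_T=P_{\mathcal{Q}_{\varphi\psi}}-P_{\mathcal{Q}_\varphi}-P_{\mathcal{Q}_\psi}$. The commutativity you assume is also genuinely used in both identities. It is implicit in ``isometric pair'' and holds for $(T_\varphi,T_\psi)$.
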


\begin{lemma}[\cite{GuoWang20072} Proposition 3.2]\label{thm_3}
	Let $\mathcal{H}$ be a Hilbert space, $N_1$ and $N_2$ be two closed subspaces of $\mathcal{H}$. Then $P_{N_1} P_{N_2}$ is compact if and only if $N_1+N_2$ is closed and $P_{N_1+N_2}-\left(P_{N_1}+\right.$ $\left.P_{N_2}\right)$ is compact.
\end{lemma}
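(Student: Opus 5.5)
The plan is to work modulo the compact operators. Write $P=P_{N_1}$, $Q=P_{N_2}$, and let $\pi$ be the quotient map onto the Calkin algebra. The basic identity is
\[
(P+Q)^2-(P+Q)=PQ+QP.
\]
It reduces compactness of $PQ$ to the statement that $P+Q$ is a projection in the Calkin algebra. Closedness of $N_1+N_2$ is only needed to pass from $P+Q$ to the genuine projection $R:=P_{N_1+N_2}$.

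\emph{Sufficiency.} Assume $N_1+N_2$ is closed and $K:=R-(P+Q)$ is compact. Since $N_1,N_2\subset N_1+N_2$, we have $RP=P$ and $RQ=Q$, so $R(P+Q)=(P+Q)R=P+Q$. Then
\[
(P+Q)^2=(R-K)^2=R-RK-KR+K^2\equiv R\equiv P+Q \pmod{\text{compacts}}.
\]
Hence $PQ+QP$ is compact. Compressing by $P$ gives $P(PQ+QP)P=2PQP$, so $PQP=(QP)^*(QP)$ is compact. Therefore $QP$ is compact, and so is $PQ=(QP)^*$. Closedness of the sum is not actually used in this direction.

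\emph{Necessity.} Assume $PQ$ is compact, so $QP$ and $PQ+QP$ are compact as well. By the identity, $\pi(P+Q)$ is a self-adjoint idempotent, so $\sigma_{ess}(P+Q)\subset\{0,1\}$.

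The main step is proving that $N_1+N_2$ is closed; I will use the Friedrichs angle. The operator $QPQ=(PQ)^*(PQ)$ is compact and positive. Its eigenspace for the eigenvalue $1$ is exactly $N_1\cap N_2$, which is therefore finite-dimensional. On $(N_1\cap N_2)^\perp$, the compressions of $QPQ$ form a compact operator without eigenvalue $1$, so their norm is strictly less than $1$. This says the cosine of the Friedrichs angle between $N_1$ and $N_2$ is $<1$, which is the classical criterion for $N_1+N_2$ to be closed.

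Alternatively, if I want to avoid the angle criterion, define $T:N_1\oplus N_2\to\mathcal H$ by $T(x,y)=x+y$, so that $TT^*=P+Q$. A bound $\|x+y\|\ge c\|(x,y)\|$ on $(\ker T)^\perp$ then follows from the same norm estimate and gives closed range.

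Once $N_1+N_2=\operatorname{Ran}T$ is closed, $TT^*=P+Q$ has closed range $N_1+N_2$ and kernel $(N_1+N_2)^\perp$. Consequently $0$ is an isolated point of $\sigma(P+Q)$, and $R=\chi_{(0,\infty)}(P+Q)$. Define $h(t)=\chi_{(0,\infty)}(t)-t$. It is continuous on $\sigma(P+Q)$ because $0$ is isolated, and it vanishes at $0$ and $1$. By the continuous functional calculus,
\[
\pi\bigl(R-(P+Q)\bigr)=\pi\bigl(h(P+Q)\bigr)=h\bigl(\pi(P+Q)\bigr)=0,
\]
since $\sigma(\pi(P+Q))\subset\{0,1\}$. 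Hence $R-(P+Q)$ is compact. I expect the closedness argument to be the only delicate point, and the rest is routine functional calculus.
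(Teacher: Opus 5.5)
Your proof is correct in both directions. The paper gives no proof of this lemma; it is quoted from Guo--Wang \cite{GuoWang20072}. So your argument is a self-contained substitute rather than a variant of an in-paper argument.

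The two key computations do all the work. The identity $(P+Q)^2-(P+Q)=PQ+QP$, together with the compression $P(PQ+QP)P=2PQP=2(QP)^*(QP)$, shows that $P_{N_1}P_{N_2}$ is compact exactly when $\pi(P+Q)$ is a projection in the Calkin algebra. This makes the ``if'' direction immediate, and, as you observe, it does not use closedness of $N_1+N_2$ at all.

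In the ``only if'' direction, closedness is the one piece of geometry needed. Your route to it is sound. You identify $\ker(QPQ-I)=N_1\cap N_2$, and $(N_1\cap N_2)^\perp$ reduces $QPQ$. Hence $\|P_{N_1}P_{N_2}P_{(N_1\cap N_2)^\perp}\|^2=\|QPQ|_{(N_1\cap N_2)^\perp}\|<1$, which is exactly the Friedrichs-angle criterion. Your alternative via $T(x,y)=x+y$ also works: writing $x=m+x'$, $y=m+y'$ with $m=P_{N_1\cap N_2}x=P_{N_1\cap N_2}y$ gives $\|x+y\|^2\ge(1-c)(\|x\|^2+\|y\|^2)$, where $c$ is that norm.

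Once the range is closed, the functional-calculus step with $h(t)=\chi_{(0,\infty)}(t)-t$ is legitimate. The reason is that $\pi(h(A))=h(\pi(A))$ for $h$ continuous on $\sigma(A)$, and $h$ vanishes on $\{0,1\}\supset\sigma(\pi(P+Q))$.

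Two cosmetic remarks. First, if $N_1+N_2=\mathcal H$, then $P+Q$ is invertible and $0\notin\sigma(P+Q)$. So ``$0$ is an isolated point'' should read ``$0$ is not an accumulation point of $\sigma(P+Q)$''; the argument is unaffected. Second, in the sufficiency step the relations $R(P+Q)=(P+Q)R=P+Q$ are never used; only $R^2=R$ is.
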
 

%We first require the following lemma. 

\begin{lemma} \label{lem:1}
	If $\dim(H^2(\mathbb{D}^n)/(\varphi H^2(\mathbb{D}^n) + \psi H^2(\mathbb{D}^n))) < \infty$, $n > 1$, then $\varphi H^2(\mathbb{D}^n) \cap \psi H^2(\mathbb{D}^n) = \varphi\psi H^2(\mathbb{D}^n)$.
\end{lemma}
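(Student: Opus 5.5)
The plan is to show that any $f\in\varphi H^2(\mathbb{D}^n)\cap\psi H^2(\mathbb{D}^n)$ can be written as $f=\varphi\psi k$ with $k\in H^2(\mathbb{D}^n)$. The reverse inclusion $\varphi\psi H^2\subset\varphi H^2\cap\psi H^2$ is trivial. Write $f=\varphi g=\psi h$ with $g,h\in H^2$, and set $k:=g/\psi=h/\varphi$. This is a meromorphic function on $\mathbb{D}^n$, and it satisfies $f=\varphi\psi k$. Two things must be shown: $k$ is holomorphic, and $k\in H^2$. Both will come from the finite-codimension hypothesis, via polynomials lying in $\varphi H^2+\psi H^2$.

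\emph{Step 1: closedness and the zero set.} Consider the bounded operator $(a,b)\mapsto\varphi a+\psi b$ from $H^2\oplus H^2$ to $H^2$. Its range $\mathcal{M}=\varphi H^2+\psi H^2$ has finite codimension, and an operator range of finite codimension is automatically closed. So $\mathcal{M}$ is a closed submodule of finite codimension. Hence its zero variety is a finite set $\{\lambda^{(1)},\dots,\lambda^{(m)}\}\subset\mathbb{D}^n$, and $\mathcal{M}$ contains every polynomial vanishing to high enough order at these points. I would justify this standard fact as follows. The compression of $(M_{z_1},\dots,M_{z_n})$ to $H^2\ominus\mathcal{M}$ is a commuting tuple on a finite-dimensional space. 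Its joint spectrum consists of points of $\mathbb{D}^n$, because $M_{z_j}^*$ restricted to a finite-dimensional invariant subspace of $H^2$ has eigenvectors that are reproducing kernels. The characteristic polynomials then annihilate the quotient. In particular, for each $j$ there is a one-variable polynomial $q_j(z_j)=\prod_i(z_j-\lambda^{(i)}_j)^N$ with all roots in $\mathbb{D}$ such that $q_j\in\mathcal{M}$. Write $q_j=\varphi a_j+\psi b_j$ with $a_j,b_j\in H^2$.

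\emph{Step 2: holomorphy of $k$.} We have $q_jk=a_j\varphi k+b_j\psi k=a_jh+b_jg\in H^1(\mathbb{D}^n)$, and in particular this product is holomorphic. So the polar set of $k$ lies in $\{q_1=0\}\cap\{q_2=0\}$, which is a finite union of sets of the form $\{z_1=c,\ z_2=d\}$ of codimension $2$. This is exactly where $n>1$ is used. The polar set of a meromorphic function is either empty or a hypersurface, so it must be empty, and $k$ is holomorphic on $\mathbb{D}^n$. This is the place where division by the inner functions could otherwise fail.

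\emph{Step 3: $k\in H^2$.} This is the step I expect to be the main obstacle, because a holomorphic quotient of $H^2$ by an inner function need not lie in $H^2$ (as with $1/S$ for a singular inner $S$). Here we use $q_1k\in H^1$. The polynomial $q_1$ depends only on $z_1$ and satisfies $|q_1|\ge c>0$ on $\{r_0\le|z_1|\le1\}$ for some $r_0<1$. For $r_1\in(r_0,1)$ this gives
\[
\int_{\mathbb{T}^n}|k(r_1w_1,\dots,r_nw_n)|\,dm(w)\le c^{-1}\|q_1k\|_{H^1}.
\]
For $r_1\le r_0$, the maximum principle in the first variable controls the integral means by those at $r_1=r_0$. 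Hence $k\in H^1(\mathbb{D}^n)$, and in particular $k$ lies in the Smirnov class $N_*$. On $\mathbb{T}^n$ its boundary values satisfy $|k|=|g|/|\psi|=|g|\in L^2$. By Rudin's theorem that $N_*\cap L^2(\mathbb{T}^n)=H^2$, we conclude $k\in H^2$, and so $f=\varphi\psi k\in\varphi\psi H^2(\mathbb{D}^n)$. If invoking Rudin's theorem is undesirable, an alternative is to work from the start with $k\in H^1$ and observe that $k=P(\overline{\psi}g)$-type identities hold for its boundary function, which places it directly in $H^2$.
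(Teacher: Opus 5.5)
Your proof is correct, but it takes a different route from the paper's.

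\textbf{The paper's route.} The paper imports a quantitative estimate from Wang--Zhu: finite codimension of $\varphi H^2+\psi H^2$ gives $|\varphi|^2+|\psi|^2>\delta$ near the boundary. From this it argues that $Z(\varphi)\cap Z(\psi)\cap\mathbb{D}^n$ is a compact analytic set, hence finite, and removes this set by Hartogs. It then gets $K\in H^2$ from the one-line pointwise bound $|K|^2\le(|g|^2+|h|^2)/\delta$.

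\textbf{Your route.} You never use a lower bound on $|\varphi|,|\psi|$.
\begin{itemize}
\item You extract one-variable polynomials $q_j(z_j)$ with roots in $\mathbb{D}$ from the finite-dimensional quotient module, via characteristic polynomials of the compressed shifts. The closedness of the algebraic sum is what lets you write $q_j=\varphi a_j+\psi b_j$ exactly.
\item Holomorphy of $k$ follows because $q_1k$ and $q_2k$ are holomorphic, and $\{q_1=0\}\cap\{q_2=0\}$ has codimension $2$. This is where $n>1$ enters.
\item You get $k\in H^1$ from $|q_1|\ge c$ near $|z_1|=1$.
\item You upgrade to $H^2$ using $|k^*|=|g^*|$ together with Rudin's theorem $N_*\cap L^2=H^2$. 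More elementarily, you could use the Poisson representation of $H^1(\mathbb{D}^n)$ functions plus Jensen's inequality.
\end{itemize}

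\textbf{Comparison.} Your argument is self-contained and purely module-theoretic. In particular, it bypasses the compactness of the common zero set. That compactness needs the lower bound on $|\varphi|^2+|\psi|^2$ near all of $\partial\mathbb{D}^n$, not just along the radii $r\xi$ as the paper states it. The price is an extra layer of boundary-value theory ($H^1$ first, then $N_*$ or Poisson integrals). The paper's estimate, once granted, yields the $H^2$ bound directly.

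\textbf{A small wording point.} In Step 3, for $r_1\le r_0$, the maximum principle alone does not give what you need. What you need is that the circular means of the subharmonic function $|k(\cdot,z')|$ are increasing in $r_1$.
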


\begin{proof}
	It is clear that $\varphi\psi H^2(\mathbb{D}^n) \subseteq \varphi H^2(\mathbb{D}^n) \cap \psi H^2(\mathbb{D}^n)$. It is enough to prove $\varphi H^2(\mathbb{D}^n) \cap \psi H^2(\mathbb{D}^n) \subseteq \varphi\psi H^2(\mathbb{D}^n)$. For arbitrary $f \in \varphi H^2(\mathbb{D}^n) \cap \psi H^2(\mathbb{D}^n)$, write $f = \varphi g = \psi h$, where $g, h \in H^2(\mathbb{D}^n)$. Let
	$$
    K := \frac{g}{\psi} = \frac{h}{\varphi}.
    $$
	Hence, $K$ can be analytically extended to $\mathbb{D}^n \setminus (Z(\varphi) \cap Z(\psi))$, where \(Z(f)\) denotes the zero set of function \(f\). Since $\varphi H^2+\psi H^2$ has finite codimension, there exists $0 < r_0 < 1$ and $\delta > 0$ such that $|\varphi(r\xi)|^2 + |\psi(r\xi)|^2 > \delta$ for all $\xi \in \mathbb{T}^n$ and $r > r_0$ (see Lemma 2.2 in \cite{Wang2025}).  It is easy to check that $Z(\varphi) \cap Z(\psi) \cap \mathbb{D}^n$ is a compact analytic subvariety of $\mathbb{C}^n$. Hence $Z(\varphi) \cap Z(\psi) \cap \mathbb{D}^n$ is a finite set(\cite{Rudin1980} Theorem 14.3.1), which implies that $\mathbb{D}^n \setminus (Z(\varphi) \cap Z(\psi))$ is connected. Hence, we have $K \in H(\mathbb{D}^n)$ by Hartogs's Theorem. If $r > r_0$ we have
	$$|K(r\xi)|^2 = \frac{|h(r\xi)|^2}{|\varphi(r\xi)|^2} = \frac{|g(r\xi)|^2}{|\psi(r\xi)|^2} = \frac{|h(r\xi)|^2 + |g(r\xi)|^2}{|\varphi(r\xi)|^2 + |\psi(r\xi)|^2} < \frac{|h(r\xi)|^2 + |g(r\xi)|^2}{\delta}, \quad \xi \in \mathbb{T}^n.$$
	Hence, $K \in H^2(\mathbb{D}^n)$, that is $\varphi H^2(\mathbb{D}^n) \cap \psi H^2(\mathbb{D}^n) \subseteq \varphi\psi H^2(\mathbb{D}^n)$.
\end{proof}

\subsection{Results}
We say that two functions \( f_1, f_2 \in H^2(\mathbb{D}^2) \) are \emph{separable} if \( f_1 \) depends only on one variable (either \( z_1 \) or \( z_2 \)) and \( f_2 \) depends only on the other. Let \(T=(T_\varphi,T_\psi)\) and \(\Delta_T\) denote the defect operator of \(T=(T_\varphi,T_\psi)\) in the following.

\begin{proposition} \label{lem:2}
	For inner functions $\varphi$ and $\psi$ in $H^\infty(\mathbb{D}^n)$, $n > 1$, the following are equivalent:
	\begin{enumerate}
		\item[(1)] $P_{\mathcal{Q}_\varphi} P_{\mathcal{Q}_\psi}$ is compact;
		\item[(2)] $\Delta_T$ is compact.
	\end{enumerate}
\end{proposition}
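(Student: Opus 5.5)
The plan is to observe that the defect operator $\Delta_T$ is \emph{literally equal} to the product $P_{\mathcal{Q}_\varphi}P_{\mathcal{Q}_\psi}$. If so, the equivalence of (1) and (2) is immediate, with no appeal to Lemma~\ref{lem_1} or Lemma~\ref{thm_3} needed.

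First, we record that for an inner function $\theta\in H^\infty(\mathbb{D}^n)$ the Toeplitz operator $T_\theta=M_\theta$ is an isometry on $H^2(\mathbb{D}^n)$, since $|\theta|=1$ a.e. on $\mathbb{T}^n$. Its range is the closed subspace $\theta H^2(\mathbb{D}^n)$. Hence $T_\theta T_\theta^*$ is the orthogonal projection onto $\theta H^2(\mathbb{D}^n)$, that is,
\[
T_\varphi T_\varphi^*=P_\varphi=I-P_{\mathcal{Q}_\varphi},\qquad T_\psi T_\psi^*=P_\psi=I-P_{\mathcal{Q}_\psi}.
\]
(This is the same identity $P_\varphi=T_\varphi T_\varphi^*$ used in the proof of Theorem~\ref{thm_A}, now on the polydisc. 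In particular $T=(T_\varphi,T_\psi)$ is an isometric pair, so $\Delta_T$ is defined.)

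Next, we substitute these identities into the definition of the defect operator:
\[
\Delta_T=I-P_\varphi-P_\psi+P_\varphi P_\psi=(I-P_\varphi)(I-P_\psi)=P_{\mathcal{Q}_\varphi}P_{\mathcal{Q}_\psi}.
\]
Thus $\Delta_T$ and $P_{\mathcal{Q}_\varphi}P_{\mathcal{Q}_\psi}$ coincide as operators. In particular, one is compact exactly when the other is, which proves (1)$\Leftrightarrow$(2). The same identity also shows that $P_{\mathcal{Q}_\varphi}P_{\mathcal{Q}_\psi}$ has finite rank exactly when $\Delta_T$ does. This will be useful later when combined with Lemma~\ref{lem_1}(1).

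There is essentially no obstacle here. The only point to check carefully is the ordering in the last term of $\Delta_T$: it is $T_1T_1^*T_2T_2^*=P_\varphi P_\psi$, which is what makes the factorization $(I-P_\varphi)(I-P_\psi)$ come out in the stated order. Had the defect operator been written with the factors in the opposite order, one would get $P_{\mathcal{Q}_\psi}P_{\mathcal{Q}_\varphi}$ instead. That is still harmless, because it is the adjoint of $P_{\mathcal{Q}_\varphi}P_{\mathcal{Q}_\psi}$, and compactness is preserved under taking adjoints.
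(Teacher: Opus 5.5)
Your computation is correct for the formula as literally printed. If the last term of $\Delta_T$ were $T_1T_1^*T_2T_2^*=P_\varphi P_\psi$, then $\Delta_T=P_{\mathcal{Q}_\varphi}P_{\mathcal{Q}_\psi}$ and the proposition would be empty. But that is not the operator the paper works with, and the fact that you needed neither Lemma~\ref{lem_1} nor Lemma~\ref{thm_3} is a warning sign.

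The paper's proof uses the identity $P_{\mathcal{Q}_{\varphi\psi}}-P_{\mathcal{Q}_\varphi}-P_{\mathcal{Q}_\psi}=-\Delta_T$, and Lemma~\ref{lem_1} is Guo--Wang's result. Both refer to the standard defect operator $\Delta_T=I-T_1T_1^*-T_2T_2^*+T_1T_2T_1^*T_2^*$. Its last term is $(T_1T_2)(T_1T_2)^*=P_{\varphi\psi}$, the projection onto $\varphi\psi H^2(\mathbb{D}^n)$. So the ordering issue you flagged is not the relevant one. For this operator your key identity fails: writing $T_\varphi^*T_\psi=T_\psi T_\varphi^*+[T_\varphi^*,T_\psi]$ gives
\[
\Delta_T=P_{\mathcal{Q}_\varphi}P_{\mathcal{Q}_\psi}+\bigl(P_{\varphi\psi}-P_\varphi P_\psi\bigr)=P_{\mathcal{Q}_\varphi}P_{\mathcal{Q}_\psi}-T_\varphi[T_\varphi^*,T_\psi]T_\psi^*,
\]
and this equals $P_{\mathcal{Q}_\varphi}P_{\mathcal{Q}_\psi}$ only when $T_\varphi^*$ and $T_\psi$ commute. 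Since $T_\varphi$ and $T_\psi$ are isometries, (1)$\Rightarrow$(2) is therefore equivalent to: compactness of $P_{\mathcal{Q}_\varphi}P_{\mathcal{Q}_\psi}$ forces compactness of $[T_\varphi^*,T_\psi]$. That is exactly the nontrivial information Theorems~\ref{thm_B} and~\ref{thm_C} later extract via Lemma~\ref{lem_1} and Theorem~\ref{ding}. Under your reading, that content would be hidden inside Lemma~\ref{lem_1}, which would then no longer be the result Guo and Wang proved.

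What is missing is a way to control the correction term, and the paper supplies it in three steps.
\begin{enumerate}
\item By Lemma~\ref{thm_3}, compactness of $P_{\mathcal{Q}_\varphi}P_{\mathcal{Q}_\psi}$ gives that $\mathcal{Q}_\varphi+\mathcal{Q}_\psi$ is closed and that $P_{\mathcal{Q}_\varphi+\mathcal{Q}_\psi}-P_{\mathcal{Q}_\varphi}-P_{\mathcal{Q}_\psi}$ is compact.
\item One has $\mathcal{Q}_\varphi+\mathcal{Q}_\psi=(\varphi H^2\cap\psi H^2)^\perp$. Lemma~\ref{lem:1}, a genuinely several-variable input, gives $\varphi H^2\cap\psi H^2=\varphi\psi H^2$. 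Its finite-codimension hypothesis comes from (1): the compact product is the identity on $\mathcal{Q}_\varphi\cap\mathcal{Q}_\psi=(\varphi H^2+\psi H^2)^\perp$, and closedness of $\mathcal{Q}_\varphi+\mathcal{Q}_\psi$ passes to $\varphi H^2+\psi H^2$.
\item Hence $\mathcal{Q}_\varphi+\mathcal{Q}_\psi=\mathcal{Q}_{\varphi\psi}$, and $-\Delta_T=P_{\mathcal{Q}_{\varphi\psi}}-P_{\mathcal{Q}_\varphi}-P_{\mathcal{Q}_\psi}$ is compact.
\end{enumerate}
The converse (2)$\Rightarrow$(1) is Lemma~\ref{lem_1}, since $[T_\varphi^*,T_\varphi][T_\psi^*,T_\psi]=P_{\mathcal{Q}_\varphi}P_{\mathcal{Q}_\psi}$. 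As it stands, your argument proves only the tautological version of the proposition, not the one the paper needs.
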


\begin{proof}
	(2) $\Rightarrow$ (1) is obviously by Lemma \ref{lem_1}.
    
    (1) \(\Rightarrow\) (2). By Lemma \ref{thm_3}, we know that $P_{\mathcal{Q}_\varphi} P_{\mathcal{Q}_\psi}$ is compact if and only if $\mathcal{Q}_\varphi + \mathcal{Q}_\psi$ is closed and $P_{\mathcal{Q}_\varphi + \mathcal{Q}_\psi} - P_{\mathcal{Q}_\varphi} - P_{\mathcal{Q}_\psi}$ is compact. Hence, by Lemma \ref{lem:1}, we have
	$$
	\begin{aligned}
		&P_{\mathcal{Q}_\varphi + \mathcal{Q}_\psi} - P_{\mathcal{Q}_\varphi} - P_{\mathcal{Q}_\psi}\\  = &P_{\left( \varphi H^2 \cap \psi H^2\right)^{\perp}} - P_{\mathcal{Q}_\varphi} - P_{\mathcal{Q}_\psi} \\
		=&P_{\mathcal{Q}_{\varphi \psi}} - P_{\mathcal{Q}_\varphi} - P_{\mathcal{Q}_\psi} \\
		=&- \Delta_T. 
	\end{aligned} 
	$$
	Hence $\Delta_T$ is compact.
\end{proof}

For the bidisc ($n=2$), we establish the following equivalence.

\begin{theorem}\label{thm_B}
	For nonconstant inner functions $\varphi$ and $\psi$ in $H^{\infty}\left(\mathbb{D}^2\right)$, the following are equivalent:\par 
    \begin{enumerate}
        \item $P_{\mathcal{Q}_{\varphi}} P_{\mathcal{Q}_\psi}$ is compact;
        \item $\Delta_T$ is compact;
        \item $P_{\mathcal{Q}_{\varphi}} P_{\mathcal{Q}_\psi}$ is a finite rank projection;
        \item $\varphi$ and $\psi$ are separated finite Blaschke products.
    \end{enumerate}

\end{theorem}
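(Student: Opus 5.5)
Since Proposition \ref{lem:2} already gives (1)$\Leftrightarrow$(2), the plan is to prove the cycle (4)$\Rightarrow$(3)$\Rightarrow$(1)$\Rightarrow$(4). Of these, (3)$\Rightarrow$(1) is trivial.

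\emph{(4)$\Rightarrow$(3).} Suppose $\varphi=B_1(z_1)$ and $\psi=B_2(z_2)$ are finite Blaschke products of degrees $d_1,d_2$. Then
\[
H^2(\mathbb{D}^2)=H^2(\mathbb{D})\otimes H^2(\mathbb{D}),\qquad \mathcal{Q}_\varphi=K_{B_1}\otimes H^2(\mathbb{D}),\qquad \mathcal{Q}_\psi=H^2(\mathbb{D})\otimes K_{B_2},
\]
where $K_{B_i}=H^2(\mathbb{D})\ominus B_iH^2(\mathbb{D})$. The two projections are $P_{K_{B_1}}\otimes I$ and $I\otimes P_{K_{B_2}}$, which commute. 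Their product is $P_{K_{B_1}}\otimes P_{K_{B_2}}$, a projection of rank $d_1d_2<\infty$.

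\emph{(1)$\Rightarrow$(4).} This is the substantive direction. I would proceed in three steps.

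\textbf{Step 1 (separation of variables).} Assume $P_{\mathcal{Q}_\varphi}P_{\mathcal{Q}_\psi}$ is compact. By Proposition \ref{lem:2}, $\Delta_T$ is compact, so by Lemma \ref{lem_1} the cross-commutator $[T_\varphi^*,T_\psi]$ is compact. Since $\varphi,\psi$ are bounded analytic, hence pluriharmonic, Theorem \ref{ding} applies with $f=\overline{\varphi}$ and $g=\psi$, because $T_\varphi^*=T_{\overline\varphi}$. For each $j\in\{1,2\}$, condition (3) there says one of the following holds:
\begin{itemize}
\item[(a)] $\overline\varphi$ and $\psi$ are both analytic in $z_j$;
\item[(b)] both are conjugate analytic in $z_j$;
\item[(c)] $\psi-C_j\overline\varphi$ is constant in $z_j$.
\end{itemize}
Case (a) forces $\varphi$ to be independent of $z_j$. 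Case (b) forces $\psi$ to be independent of $z_j$. In case (c), applying $\partial/\partial z_j$ and $\partial/\partial\bar z_j$ separately gives $\partial_j\psi=0$, and $C_j\,\partial_j\varphi=0$. So in every case, for each $j$, either $\varphi$ or $\psi$ does not depend on $z_j$. Nonconstancy then rules out the possibility that one function is independent of both variables. Hence, after relabeling, $\varphi=\varphi_1(z_1)$ and $\psi=\psi_2(z_2)$ with $\varphi_1,\psi_2$ one-variable inner functions. (If both depended only on $z_1$, one of them would be constant.)

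\textbf{Step 2 (finiteness).} With this tensor structure, the computation from the reverse direction shows $P_{\mathcal{Q}_\varphi}P_{\mathcal{Q}_\psi}=P_{K_{\varphi_1}}\otimes P_{K_{\psi_2}}$. A tensor product of two nonzero projections is compact only if both factors have finite rank. Here both factors are nonzero because the functions are nonconstant. So $\dim K_{\varphi_1}<\infty$ and $\dim K_{\psi_2}<\infty$. This means $\varphi_1$ and $\psi_2$ are finite Blaschke products, up to unimodular constants, which gives (4).

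The main obstacle is the case analysis in Step 1. One must check that Theorem \ref{ding} is legitimately applied to the symbol pair $(\overline\varphi,\psi)$, and verify carefully that in case (c) the constant $C_j$ cannot rescue a configuration where both functions depend on $z_j$. This reduces to the observation that a function that is simultaneously analytic and antianalytic in $z_j$ is constant in $z_j$. Once this is settled, the remaining steps are routine tensor-product bookkeeping.
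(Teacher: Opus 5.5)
Your proposal is correct and follows essentially the same route as the paper: (1)$\Leftrightarrow$(2) via Proposition~\ref{lem:2}, then Lemma~\ref{lem_1} and Theorem~\ref{ding} to force $\varphi,\psi$ to be separated, and finally the tensor decomposition $P_{\mathcal{Q}_\varphi}P_{\mathcal{Q}_\psi}=P_{K_{\varphi_1}}\otimes P_{K_{\psi_2}}$, used both for finiteness and for (4)$\Rightarrow$(3). Your case analysis of condition (3) of Theorem~\ref{ding} for the pair $(\overline{\varphi},\psi)$, and your check that both tensor factors are nonzero, fill in details the paper dismisses as ``easy to see.''
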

\begin{proof}
	The equivalence $(1) \Leftrightarrow (2)$ is exactly the result of Proposition \ref{lem:2}. The implication $(3) \Rightarrow (1)$ is trivial.
	It remains to show $(2) \Rightarrow (4)$ and $(4) \Rightarrow (3)$.

	(2)$\Rightarrow$(4). Assume that $\Delta_T$ is compact. By Lemma \ref{lem_1}, the commutator $[T_\varphi, T_\psi^*] = T_\varphi T_{\overline{\psi}} - T_{\overline{\psi}} T_\varphi$ is compact. It is easy to see that $\varphi$ and $\psi$ must be separated since $\varphi$ and $\psi$ are both analytic by Theorem \ref{ding}. 
	
	Without loss of generality, assume $\varphi$ and $\psi$ depend only on $z_1$ and $z_2$ respectively. We have
	$$ \operatorname{Ran}(P_{\mathcal{Q}_\varphi} P_{\mathcal{Q}_\psi}) = \left( H^2(\mathbb{D}_1) \ominus \varphi H^2(\mathbb{D}_1) \right) \otimes \left( H^2(\mathbb{D}_2) \ominus \psi H^2(\mathbb{D}_2) \right), $$
    where $H^2(\mathbb{D}_i)$ denotes the univariate Hardy space depending only on the variable $z_i$ for $i=1,2$.
	
	Since $\Delta_T$ is compact, the equivalence established earlier ensures that $P_{\mathcal{Q}_\varphi} P_{\mathcal{Q}_\psi}$ is also compact. This occurs if and only if both $H^2(\mathbb{D}_1) \ominus \varphi H^2(\mathbb{D}_1)$ and $H^2(\mathbb{D}_2) \ominus \psi H^2(\mathbb{D}_2)$ are finite-dimensional, which implies that $\varphi$ and $\psi$ are separated finite Blaschke products.
	
	(4)$\Rightarrow$(3). . Since $\varphi$ and $\psi$ are separated, then $T_{\psi}T_{\varphi}^* = T^*_{\varphi} T_{\psi}$. This implies that  $P_{\mathcal{Q}_{\varphi}}  P_{\mathcal{Q}_\psi}$ is a projection. Hence we have 
    $$
    \begin{aligned}
        \operatorname{Ran}P_{\mathcal{Q}_\varphi}P_{\mathcal{Q}_\psi}&=\operatorname{Ran}P_{\mathcal{Q}_\varphi}\cap\operatorname{Ran}P_{\mathcal{Q}_{\psi}}\\
        &=\mathcal{Q}_\varphi\cap\mathcal{Q}_{\psi}\\
        &=\left( H^2(\mathbb{D}_1) \ominus \varphi H^2(\mathbb{D}_1) \right) \otimes \left( H^2(\mathbb{D}_2) \ominus \psi H^2(\mathbb{D}_2) \right).
    \end{aligned}
    $$
    Therefore, we conclude that \(P_{\mathcal{Q}_\varphi}P_{\mathcal{Q}_{\psi}}\) is a finite rank projection.
\end{proof}

Finally, for a polydisc of dimension strictly greater than 2, the condition becomes even more restrictive, implying that a compact product of such projections must be trivial:
\begin{theorem}\label{thm_C}
	For nonconstant inner functions $\varphi$ and $\psi$ in $H^{\infty}\left(\mathbb{D}^n\right), n>2$, the following are equivalent:\par 
    \begin{enumerate}
        \item $P_{\mathcal{Q}_{\varphi}} P_{\mathcal{Q}_\psi}$ is compact;
        \item $\Delta_T$ is compact;
        \item $P_{\mathcal{Q}_{\varphi}} P_{\mathcal{Q}_\psi}=0$.
    \end{enumerate}
    
\end{theorem}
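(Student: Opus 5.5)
The plan is to prove the cycle $(1)\Leftrightarrow(2)$, $(3)\Rightarrow(1)$, $(2)\Rightarrow(3)$. The equivalence $(1)\Leftrightarrow(2)$ is exactly Proposition \ref{lem:2}, which holds for every $n>1$. The implication $(3)\Rightarrow(1)$ is trivial, since the zero operator is compact. So the only real content is $(2)\Rightarrow(3)$, and I would follow the same pattern as the proof of Theorem \ref{thm_B}.

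\textbf{Step 1: the symbols separate variables.} Assume $\Delta_T$ is compact. By Lemma \ref{lem_1}(2), the commutator $[T_\varphi^*,T_\psi]=T_{\overline{\varphi}}T_\psi-T_\psi T_{\overline{\varphi}}$ is compact. Both $\overline{\varphi}$ and $\psi$ are bounded pluriharmonic, so Theorem \ref{ding} applies with $f=\overline{\varphi}$ and $g=\psi$. For each index $j$ it gives three possibilities:
\begin{enumerate}
\item $\overline{\varphi}$ is analytic in $z_j$, which forces $\varphi$ to be independent of $z_j$;
\item $\psi$ is conjugate analytic in $z_j$, which forces $\psi$ to be independent of $z_j$;
\item $\psi-C_j\overline{\varphi}$ is independent of $z_j$. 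Applying $\partial/\partial z_j$ gives $\partial\psi/\partial z_j=0$, and applying $\partial/\partial\bar z_j$ gives $C_j\,\overline{\partial\varphi/\partial z_j}=0$. So $\psi$ is independent of $z_j$, and so is $\varphi$ unless $C_j=0$.
\end{enumerate}
In every case at least one of $\varphi,\psi$ does not depend on $z_j$. Hence $\{1,\dots,n\}=A\sqcup B\sqcup C$, where $\varphi$ depends only on the variables in $A$, $\psi$ only on those in $B$, and $C$ collects the remaining variables. Since both functions are nonconstant, $A\neq\emptyset$ and $B\neq\emptyset$.

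\textbf{Step 2: the product is a projection with tensor-product range.} As in part (4)$\Rightarrow$(3) of Theorem \ref{thm_B}, separation of variables makes $T_\psi$ and $T_\varphi^*$ commute. Therefore $P_{\mathcal Q_\varphi}P_{\mathcal Q_\psi}$ is the orthogonal projection onto
\[
\mathcal Q_\varphi\cap\mathcal Q_\psi=\bigl(H^2(\mathbb D^A)\ominus\varphi H^2(\mathbb D^A)\bigr)\otimes\bigl(H^2(\mathbb D^B)\ominus\psi H^2(\mathbb D^B)\bigr)\otimes H^2(\mathbb D^C).
\]
A compact projection has finite rank. So under (1) this tensor product is finite-dimensional.

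\textbf{Step 3: the finite-dimensional range must be zero.} Each of the first two factors is nonzero, because $\varphi$ and $\psi$ are nonconstant. A tensor product of nonzero spaces is finite-dimensional only if every factor is. Since $n>2$, either $C\neq\emptyset$, in which case $H^2(\mathbb D^C)$ is infinite-dimensional, or one of $A,B$, say $A$, has at least two elements. In the second case I must show that a nonconstant inner function $\varphi$ in $k\ge2$ variables has $H^2(\mathbb D^k)\ominus\varphi H^2(\mathbb D^k)$ infinite-dimensional.

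I expect this last fact to be the main obstacle. My first attempt would argue as follows. Suppose $\varphi H^2(\mathbb D^k)$ had finite codimension. A submodule of finite codimension is determined by finitely many point evaluations and derivatives, so its common zero set in $\mathbb D^k$ is finite. But the zero set of a function in $k\ge2$ variables is either empty or a positive-dimensional analytic variety, hence infinite. So $\varphi$ would have to be zero-free. A zero-free submodule of finite codimension is all of $H^2(\mathbb D^k)$, so $\varphi H^2=H^2$, which forces $\varphi$ to be an invertible inner function. An invertible inner function is constant, a contradiction. (Alternatively, I would look for a usable finite-codimension lemma in the spirit of the one quoted from \cite{Wang2025} in Lemma \ref{lem:1}.)

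With this fact in hand, the range in Step 2 is infinite-dimensional whenever it is nonzero. Finite rank therefore forces $\mathcal Q_\varphi\cap\mathcal Q_\psi=\{0\}$, i.e. $P_{\mathcal Q_\varphi}P_{\mathcal Q_\psi}=0$, which is (3).
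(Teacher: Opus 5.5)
Your proof is correct, and Steps 1--2 agree with the paper. Your partition $\{1,\dots,n\}=A\sqcup B\sqcup C$ is a more careful version of what the paper calls ``separable'', a notion it only defines for $n=2$.

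The routes diverge at Step 3. The paper does not use the tensor-product form of the range. It quotes Guo--Wang to get that $\varphi H^2(\mathbb{D}^n)+\psi H^2(\mathbb{D}^n)$ has finite codimension when $\Delta_T$ is compact. It then applies the Ahern--Clark theorem to this two-generator submodule in $n\ge 3$ variables, concluding that the sum is all of $H^2(\mathbb{D}^n)$, so $\mathcal{Q}_\varphi\cap\mathcal{Q}_\psi=\{0\}$. You need less:
\begin{enumerate}
\item a compact projection has finite rank;
\item $n\ge 3$ forces either a free variable or a symbol depending on at least two variables;
\item a nonconstant inner function in at least two variables has infinite-dimensional quotient module.
\end{enumerate}
Your sketch of the third fact is sound. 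A proper finite-codimensional submodule has a finite, nonempty zero set, since a common eigenvector of the $M_{z_j}^*$ on its orthocomplement is a reproducing kernel. But zeros of a holomorphic function in two or more variables are never isolated. This route avoids the Guo--Wang input and is more elementary.

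Your argument also shows something the paper's route hides, and you should state it outright. At the end you write that finite rank ``forces'' $\mathcal{Q}_\varphi\cap\mathcal{Q}_\psi=\{0\}$, but you have already shown this intersection is a tensor product of nonzero factors. So your argument actually proves that (1) never holds for nonconstant $\varphi,\psi$ when $n>2$, and (2)$\Rightarrow$(3) holds only vacuously.

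In fact (3) is impossible for any pair of nonconstant inner functions. If $P_{\mathcal{Q}_\varphi}P_{\mathcal{Q}_\psi}=0$, then $\mathcal{Q}_\psi\subseteq\varphi H^2(\mathbb{D}^n)$. The closed invariant subspace generated by the nonzero co-invariant space $\mathcal{Q}_\psi$ reduces the doubly commuting coordinate shifts, so it equals $H^2(\mathbb{D}^n)$, and this forces $\varphi$ to be constant. The theorem is therefore an equivalence of three statements that are all false. The cleanest form of your proof is to derive a contradiction from (1).
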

\begin{proof}
	As before, $(1) \Leftrightarrow (2)$ follows directly from Proposition \ref{lem:2}, and $(3) \Rightarrow (1)$ is trivial. We only need to prove $(2) \Rightarrow (3)$.
	
    Since $\Delta_T$ is compact, Lemma \ref{lem_1} implies that $[T_\varphi^*, T_\psi]$ is compact. By Theorem \ref{ding}, the compactness of $[T_\varphi^*, T_\psi]$ is equivalent to $\varphi$ and $\psi$ being separable, which in turn implies that $P_{\mathcal{Q}_{\varphi}} P_{\mathcal{Q}_\psi}$ is a projection. 

    Moreover, according to the result in \cite{GuoWang2007}, the compactness of $\Delta_T$ also implies that $\varphi H^2(\mathbb{D}^n) + \psi H^2(\mathbb{D}^n)$ has finite codimension. For $n \geq 3$, the classical result of Ahern and Clark in \cite{Clark} states that the invariant subspace generated by $\varphi$ and $\psi$ is either the whole space $H^2(\mathbb{D}^n)$ or has infinite codimension. Consequently, we must have 
   \[
   \varphi H^2(\mathbb{D}^n) + \psi H^2(\mathbb{D}^n) = H^2(\mathbb{D}^n),
   \]
    which means $\mathcal{Q}_\varphi \cap \mathcal{Q}_\psi = \{0\}$. Hence we conclude that $P_{\mathcal{Q}_{\varphi}} P_{\mathcal{Q}_\psi} = 0$.

\end{proof}
\subsection{Remark}
Debnath et al. proved that the product of two inner projections is a finite-rank projection if and only if $\varphi$ and $\psi$ are separated finite Blaschke products \cite{Debnath2024}. This result answers Douglas's original question in \cite{Bickel2016}. Their proof relies on Sarkar's results of partially isometric Toeplitz operators \cite{Sarkar2018}. We arrive at the same conclusion independently by characterizing the compactness of the product of inner projections. In the multivariate setting, we further observe that compactness of the product of inner projections reduces to the operator being of finite rank.

In \cite{Sarkar2018}, the authors also characterized when \([P_{\mathcal{Q}_{\varphi}},P_{\mathcal{Q}_{\psi}}] = 0\). One may ask the following generalization of Douglas's original question: 
\begin{center}
    When \([P_{\mathcal{Q}_{\varphi}},P_{\mathcal{Q}_{\psi}}]\) is compact on \(H^2(\mathbb{D}^n)\) for \(n\geq2\)?
\end{center}
We suspect the answer to this question may reduce to the case where \([P_{\mathcal{Q}_{\varphi}}, P_{\mathcal{Q}_{\psi}}] = 0\). However, the problem remains open.

\medskip
\bigskip

\subsection*{Acknowledgment}
Y. Lu was supported by the National Natural Science Foundation of China (Grant No.12031002), Y. Yang was supported by the National Natural Science Foundation of China (Grant No.12471117), C. Zu was supported by the National Natural Science Foundation of China (Grant No.12401151), and the Postdoctoral Researcher Foundation of China (Grant No.GZB20240100).

%\subsection*{Author Contribution Statement}
%All authors contributed equally to the research, analysis, and preparation of the manuscript. Each author approved the final version of the paper and agrees to be accountable for all aspects of the work.
%
\subsection*{Conflict of interest}
The authors have no conflict of interest to declare that are relevant to the content of this article. 
\subsection*{Data availability statement}
No data, models, or code were generated or used for the research described in the article.%Data sharing is not applicable to this article as no new data were created or analyzed in this study.


\begin{thebibliography}{99}
\bibitem{Clark}
P.R. Ahern and D.N. Clark, Invariant subspaces and analytic continuation in several variables, J. Math. Mech. {\bf 19} (1969/70), 963--969.

\bibitem{ACS1978}
S. Axler, S.Y. A. Chang, and D. Sarason, \emph{Product of Toeplitz operators}, Integral Equations Operator Theory \textbf{1} (1978), 285--309.

\bibitem{Beurling1948}
A. Beurling, \emph{On two problems concerning linear transformations in Hilbert space}, Acta Math. \textbf{81} (1948), 17 pp.

\bibitem{Berger1978}
C.~A. Berger, L.~A. Coburn and A. Lebow, Representation and index theory for $C\sp*$-algebras generated by commuting isometries, J. Functional Analysis {\bf 27} (1978), no.~1, 51--99.

\bibitem{Bickel2016}
K. Bickel and C. Liaw, Properties of Beurling-type submodules via Agler decompositions, J. Funct. Anal. {\bf 272} (2017), no.~1, 83--111.

\bibitem{Bottcher2010}
A. Böttcher and I. M. Spitkovsky, \emph{A gentle guide to the basics of two projections theory}, Linear Algebra and its Applications \textbf{432} (2010), no. 6, 1412--1459.

\bibitem{Chu2015}
C. Chu, \emph{Compact product of Hankel and Toeplitz operators on the Hardy space}, Indiana University Mathematics Journal (2015), 973--982.

\bibitem{Davis1958}
C. Davis, \emph{Separation of two linear subspaces}, Acta Scientiarum Mathematicarum (Szeged) \textbf{19} (1958), 172--187.

\bibitem{Debnath2024}
R. Debnath, D. K. Pradhan, and J. Sarkar, \emph{Pairs of inner projections and two applications}, J. Funct. Anal. \textbf{286} (2024), no. 2, Paper No. 110216, 26 pp.

\bibitem{Sarkar2018}
K.~D. Deepak, D.~K. Pradhan and J. Sarkar, Partially isometric Toeplitz operators on the polydisc, Bull. Lond. Math. Soc. {\bf 54} (2022), no.~4, 1350--1362.

\bibitem{Ding2003}
X.~H. Ding, \emph{Products of Toeplitz operators on the polydisk}, Integral Equations Operator Theory {\bf 45} (2003), no.~4, 389--403.

\bibitem{Dixmier1948}
J. Dixmier, \emph{Position relative de deux variétés linéaires fermées dans un espace de Hilbert}, Revue Scientifique \textbf{86} (1948), 387--399.

\bibitem{Douglas1973}
R. G. Douglas, \emph{Banach algebra techniques in the theory of Toeplitz operators}, CBMS, vol. 15, A.M.S, Providence, 1973.

\bibitem{Garnett1981}
J. B. Garnett, \emph{Bounded Analytic Functions}, Academic Press, New York, 1981.

\bibitem{Guillory1984}
P.~B. Gorkin, Singular functions and division in $H\sp{\infty }+C$, Proc. Amer. Math. Soc. {\bf 92} (1984), no.~2, 268--270.

\bibitem{Guillory1981}
C.~J. Guillory and D.~E. Sarason, Division in $H\sp{\infty }+C$, Michigan Math. J. {\bf 28} (1981), no.~2, 173--181.

\bibitem{Guo2004}
K.~Y. Guo, Defect operators, defect functions and defect indices for analytic submodules, J. Funct. Anal. {\bf 213} (2004), no.~2, 380--411.

\bibitem{GuoWang2007}
K.Y. Guo and P.H. Wang, \emph{Defect operators and Fredholmness for Toeplitz pairs with inner symbols}, J. Operator Theory \textbf{58} (2007), no. 2, 251--268.

\bibitem{GuoWang20072}
K.Y. Guo and P.H. Wang, Essentially normal Hilbert modules and $K$-homology. III. Homogenous quotient modules of Hardy modules on the bidisk, Sci. China Ser. A {\bf 50} (2007), no.3, 387--411.

\bibitem{GuoYang}
K.~Y. Guo and R. Yang, The core function of submodules over the bidisk, Indiana Univ. Math. J. {\bf 53} (2004), no.~1, 205--222.

\bibitem{Halmos1969}
P. R. Halmos, \emph{Two subspaces}, Transactions of the American Mathematical Society \textbf{144} (1969), 381--389.

\bibitem{Hoffman1967}
K.~M. Hoffman, Bounded analytic functions and Gleason parts, Ann. of Math. (2) {\bf 86} (1967), 74--111.

\bibitem{Izuchi2002}
K.~J. Izuchi, Outer and inner vanishing measures and division in $H^\infty+C$, Rev. Mat. Iberoamericana {\bf 18} (2002), no.~3, 511--540.

\bibitem{Rudin1969}
W. Rudin, \emph{Function theory in polydiscs}, Benjamin, New York, 1969.

\bibitem{Rudin1980}
W. Rudin, \emph{Function Theory in the unit ball of $\mathbb{C}^n$}, Springer Verlag, 1980.

\bibitem{Sarason1967}
D. Sarason, \emph{Generalized interpolation in $H^\infty$}, Transactions of the American Mathematical Society \textbf{127} (1967), no. 2, 179--203.

\bibitem{Wang2025}
P.~H. Wang and Z. Zhu, Fredholm index of Toeplitz pairs with $H^\infty$ symbols, Canad. Math. Bull. {\bf 68} (2025), no.~1, 166--176.

\bibitem{Yang2005}
R. Yang, The core operator and congruent submodules, J. Funct. Anal. {\bf 228} (2005), no.~2, 469--489.

\bibitem{YangRW}
R. Yang, Operator theory in the Hardy space over the bidisk. II, Integral Equations Operator Theory {\bf 42} (2002), no.~1, 99--124.

\bibitem{BiswasSarkar26}
R. Biswas and S. Sarkar, Essentially commuting projections onto shift-invariant subspaces, arXiv:2606.25649 [math.FA], 2026, \url{https://arxiv.org/abs/2606.25649}.

\end{thebibliography}
\end{document}